\theoremstyle{plain}
\newtheorem{thm}{\bf Theorem}[section]
\newtheorem{prop}[thm]{\bf Proposition}
\newtheorem{lem}[thm]{\bf Lemma}
\newtheorem{cor}[thm]{\bf Corollary}
\theoremstyle{definition}
\newtheorem{dfn}[thm]{\bf Definition}
\newtheorem{ex}[thm]{\bf Example}
\theoremstyle{remark}
\newtheorem{rem}[thm]{\bf Remark}
\DeclareMathOperator{\diag}{diag}
\def \Q{\mathbb{Q}}
\def \C{\mathbb{C}}
\def \P{\mathbb{P}}
\def \G{\mathbb{G}_m}
\def \k{\Bbbk}
\newcommand{\acknowledge}{\subsection*{Acknowledgments}}
\newcommand{\conventions}{\subsection*{Notation and Conventions}}
\title[Equivariant Cohomology of Group Embeddings]{Equivariant cohomology of rationally smooth group embeddings}
\author{Richard P. Gonzales}
\address{ 
Mathematisches Institut\\ 
Heinrich-Heine-Universit\"{a}t \\ 
40225 D\"{u}sseldorf\\ 
Germany} 
\email{rgonzalesv@gmail.com}
\begin{document}

\maketitle

\begin{abstract}
We describe the equivariant cohomology ring of rationally smooth 
projective embeddings of 
reductive groups.
These embeddings are the projectivizations of reductive monoids. 
Our main result describes their equivariant cohomology 
in terms of roots, idempotents, and underlying monoid data.
Also, we characterize 
those embeddings 
whose equivariant cohomology ring is obtained 
via restriction to the associated toric variety.  
Such characterization is given in terms of the closed orbits.  
\end{abstract}

\section*{Introduction and motivation}
\addcontentsline{toc}{section}{Introduction}

Let $G$ be a connected complex reductive algebraic group, 
$B\subset G$ a Borel subgroup, and $T\subset B$ a maximal torus.  
A $G$-variety is called {\em spherical} if it is normal 
and contains a dense $B$-orbit. 
A nice feature of spherical $G$-varieties is that they contain only 
finitely many $B$-orbits  
and  
$G$-orbits. 
This yields a complete description of the geometry of 
spherical varieties 
in terms of 
certain combinatorial objects of discrete convex geometry. 
See \cite{ti:sph} and \cite{pe:sph}  
for an up-to-date discussion of spherical varieties  
and a comprehensive bibliography. 
In this paper, we focus on  
a special yet remarkable class of spherical varieties, namely, group embeddings.  

\smallskip

An irreducible  
algebraic variety  
is called an {\em embedding} of $G$, 
or a {\em group embedding}, 
if it is a normal $G\times G$-variety containing an open orbit 
isomorphic to $G$ itself, where $G\times G$ acts on $G$ by left and right multiplication. 
When $G$ is a torus, we get back the notion of toric varieties. 
Group embeddings are 
spherical $G\times G$-varieties 
due to the Bruhat decomposition. 
Affine embeddings of $G$ are nothing but reductive monoids having $G$ as group of units \cite{ri:ge}. 
Recall that an {\em algebraic monoid} is an algebraic variety equipped with an associative 
product map, which is a morphism of varieties and admits an identity element. 
An affine algebraic monoid is called {\em reductive} if it is 
irreducible, normal, and its unit group is a reductive 
algebraic group. 
Reductive monoids have been intensively studied 
in the works of Putcha, Renner, Vinberg, Rittatore, Brion, 
and others. See \cite{pu:lam}, \cite{re:lam}, \cite{bri:monb}, \cite{ti:sph} and the references therein.

\smallskip

Let $M$ be a reductive monoid with zero and unit group $G$.
Then there exists a central one-parameter subgroup 
$\epsilon:\G\to G$ 
such that $\lim_{t\to 0}\epsilon(t)=0$.  
Moreover, 
the quotient space $\P_\epsilon(M):=(M\setminus\{0\})/\epsilon(\G)$
is a normal projective 
embedding of the quotient group $G/\epsilon(\G)$.
These varieties were introduced 
by Renner in his study of algebraic monoids \cite{re:class, re:hpoly, re:ratsm}. 
Projective embeddings of  
connected reductive groups are exactly the 
projectivizations of reductive monoids \cite{re:class}.

\smallskip 

Let $X$ be a complex algebraic variety of dimension $n$. 
Cohomology, in this article, is always considered with rational coefficients. 
We say that $X$ is {\em rationally smooth} 
if we have $H^m(X,X\setminus\{x\})=0$ for $m\neq 2n$, 
and $H^{2n}(X,X\setminus\{x\})=\mathbb{Q}$, for all $x\in X$. 
This is precisely the requirement that $X$ is a {\em rational cohomology manifold}.
See \cite{bri:rat} for a modern account of this key notion. 
Using chiefly methods from the theory of algebraic monoids, 
Renner 
investigated those 
group embeddings that 
are rationally smooth \cite{re:ratsm}, \cite{re:hpolyirr}. 
This class is larger than the class of smooth group embeddings. 

\smallskip 

Now let $X=\P_\epsilon(M)$ be a projective group embedding. 
The purpose of this article is to determine the 
equivariant cohomology rings $H^*_{T\times T}(X)$ and $H^*_{G\times G}(X)$ provided $X$ is rationally smooth.  
Our main results generalize those of Brion \cite{bri:bru} 
and Littelmann-Procesi \cite{lp:equiv} for regular group embeddings.
The main tool in our description is 
the theory developed by Goresky-Kottwitz-MacPherson 
\cite{gkm:eqc}.  
A key result of {\em GKM theory} (Theorem \ref{gkm.thm}) 
gives an explicit presentation  
of the equivariant cohomology of {\em $T$-skeletal} varieties 
(i.e. complete $T$-varieties with only 
finitely many $T$-fixed points and $T$-stable curves)   
whose odd cohomology vanishes. 
If $X$ is rationally smooth, then $X$ has no cohomology in odd degrees,
by a 
previous result of the author  
\cite[Theorem 7.4]{go:cells}. 
Hence, to attain the sought-after descriptions of 
$H^*_{T\times T}(X)$ and $H^*_{G\times G}(X)$,  
we proceed in two steps: first we compute the {\em GKM data} of $X$, i.e. 
$T\times T$-fixed points, $T\times T$-stable curves and the corresponding characters of $T\times T$, 
in terms of the combinatorial data of $M$. 
We remark that the calculation of such data is independent of whether or not $X$ 
is rationally smooth. 
Afterwards, we specialize it to the case when 
$X$ is rationally smooth via Theorem \ref{gkm.thm}.   
Our findings increase the applicability of $GKM$ theory 
in the study of singular group embeddings. 

\smallskip

The results and methods of this paper open the way to further developments, 
e.g., if $X=\P_\epsilon(M)$ is {\em any} group embedding, 
then the GKM data obtained here describes the $T\times T$-equivariant operational 
$K$-theory of $X$, i.e. certain 
ring of piecewise exponential functions acting 
on the $T\times T$-equivariant $K$-theory of $X$, see \cite{go:opk}. 
In addition, a description of the module structure of the 
(rational) equivariant Chow groups of 
rationally smooth group embeddings,   
in the spirit of \cite{go:cells},   
is obtained in \cite{go:rm}. 
Since $X$ is possibly singular, 
and working with intersection theory is usually more 
delicate than with cohomology, the description in \cite{go:rm} 
is not a straightforward generalization of \cite{go:cells}. 
Other techniques are needed. 
These results will appear elsewhere. 

\smallskip 
Here is 
an outline of the paper. 
In the first part (Sections 1 and 2)  
we work over an algebraically closed field $\k$ of  arbitrary characteristic, since  
the 
classification of group embeddings outlined above   
holds in this generality \cite[Chapter 6]{bk:frob}. 
Section 1 gathers some preliminary notions and results from 
the theory of reductive monoids.  
In Section 2 we show that 
any projective group embedding $\P_\epsilon(M)$ is 
$T\times T$-skeletal and assess the corresponding GKM data.  
See Theorem \ref{thefixedpoints.thm}, Theorem \ref{thecurves1.thm} and 
Subsection 2.4.

In the second part (Sections 3 and 4),   
we specialize the results of   
Section 2 to the case when $\k=\C$ and the group embedding 
$X=\P_\epsilon(M)$ is rationally smooth. 
Subsection 3.1 collects basic facts on GKM theory. 
Subsection 3.2 provides 
a short 
discussion  
of rationally smooth group embeddings. 
Subsection 3.3 contains our major results: Theorem \ref{main.thm} gives the ultimate description 
of 
$H^*_{T\times T}(X)$ in terms of 
the finite combinatorial invariants of $M$, i.e. the roots of $(G,T)$ and the Renner monoid. 
Theorem \ref{comparison.thm} 
compares $H^*_{G\times G}(\P_\epsilon(M))$
and $H^*_{T\times T}(\P_\epsilon(\overline{T}))$, where $\P_\epsilon(\overline{T})$ 
is the associated torus embedding.  
We show that, unlike the case of regular embeddings, $H^*_{G\times G}(\P_\epsilon(M))$ 
is in general a proper subring of $H^*_{T\times T}(\P_\epsilon(\overline{T}))^W$, 
where $W$ is the Weyl group of $(G,T)$. 
In Proposition \ref{quasireg.cor} we characterize those embeddings for which these two rings 
coincide; they correspond to the {\em toroidal} rationally smooth group embeddings, i.e.   
those whose closed $G\times G$-orbits are all of the form $G/B\times G/B^-$. 
Section 3 concludes with  
a description of the non-equivariant cohomology of toroidal rationally smooth group embeddings (Theorem \ref{dcptype.cor}). 
Finally, in Section 4, we illustrate the theory 
with a detailed study of {\em simple} projective embeddings, 
i.e. those with only one closed $G\times G$-orbit   
(Theorem \ref{mainj.thm}).

\acknowledge Some of these results were part of the author's doctoral dissertation under the supervision
of Lex Renner. I would like to thank him and Michel Brion for their invaluable help. 
I would also like to thank K\"{u}r\c{s}at Aker, whose interest in the subject allowed me to 
present this work at various universities in Turkey, and 
the Technological Research Council of Turkey (T\"{U}BITAK), 
for the support that I received, as a postdoctoral fellow, 
through Projects 107T897 and 109T667. 
I am also grateful to the German Research Foundation (DFG), Research Grant PE2165/1-1, 
for its support during the final stages of completing this article. 
Lastly, 
I thank the two referees for very helpful comments and suggestions that improved the 
clarity of the article. 

\conventions{ 
We work over an algebraically closed field $\k$. In Sections 1 and 2, $\k$ is of arbitrary characteristic;  
in Sections 3 and 4, $\k=\C$.  
All algebraic varieties and algebraic groups are assumed to be defined over $\k$. 
By a variety we mean a separated reduced scheme of finite type over $\k$; 
in particular varieties 
need not be irreducible. 
A point will always mean a closed point.

$G$ denotes 
a connected reductive 
linear algebraic group 
with Borel subgroup 
$B$ and maximal torus $T\subset B$.   
The Weyl group of $(G,T)$ is denoted by $W$. 
Recall that $W=N_G(T)/T$, where $N_G(T)$ is the normalizer of $T$ in $G$.     
We denote by $\Xi$ the character group of $T$, and by 
$\Phi$ (resp. $\Delta$) the set of roots (resp. simple roots) of $(G,T)$. 
So $\Delta\subset \Phi \subset \Xi$. 
We denote by $s_\alpha\in W$ the reflection corresponding to $\alpha \in \Phi$.  
Observe that $W$ is generated by the simple reflections $\{s_\alpha\}_{\alpha \in \Delta}$.  
We write  
$U_\alpha$ for the unipotent subgroup of $G$ associated to $\alpha \in \Phi$. 

\smallskip

For $w\in W$, denote by ${\rm int}(w)$ the inner automorphism of $T$ given by 
conjugation with (a representative 
of) $w$. 
This yields a $W$-action on $\Xi$ via 
$(w,\chi) \mapsto \chi\circ {\rm int}(w)^{-1}$, where $w\in W$, and $\chi\in \Xi$. 
We denote by $S$ the symmetric algebra over $\Q$ of the abelian group $\Xi$. 
Let $S^W\subset S$ be the subring of $W$-invariants. 
Then 
$S$ is a free $S^W$-module of rank $|W|$. Furthermore, there is a 
graded $W$-stable subspace $R\subset S$, isomorphic to the 
regular representation of $W$, such that $S\simeq R\otimes S^W$ 
as graded $S^W$-modules, see e.g. \cite[Section 3.6]{hum:cox}.

\smallskip 

For a $T$-variety $X$, we denote by $X^T$ the fixed point set, and by $i_T:X^T\to X$ 
the inclusion. 
The (rational) $T$-equivariant cohomology 
of a complex $T$-variety $X$ 
is denoted by $H^*_{T}(X)$. The $T$-equivariant cohomology of a point identifies to $S$, 
where each character has degree 2. More generally, $H^*_T(X)$ is an algebra over $S$. 
}

\section{Preliminaries}

\subsection{Reductive monoids}



We collect a few crucial results  
from the theory of reductive monoids. 
For a complete treatment of the subject, 
the reader is invitated to consult \cite{re:lam} and \cite{pu:lam}.  
Those interested in a survey of the main ideas may also see \cite{so:rm}. 
The semi-expository article \cite{bri:monb} contains more recent developments.  
\smallskip 



\smallskip 
Throughout the paper, $M$ denotes a reductive monoid with zero and unit group $G=G(M)$.  
There is a natural $G\times G$-action on $M$ given by $(g,h)\cdot a:=gah^{-1}$. 
Let $\overline{T}\subset M$ be the Zariski closure of $T$ in $M$.  
It is known that $\overline{T}$ is normal affine toric variety \cite[Theorem 5.4]{re:lam}. 
Moreover, $\overline{T}=\{x\in M\,|\, xt=tx,\; {\rm for\;all\;} t\in T\}$, see \cite[proof of Theorem 5.5]{re:lam}.

\smallskip

We write $E(M)$ for the idempotent set 
of $M$, that is, $E(M):=\{e\in M\,|\, e^2=e\}$. 
Clearly, $E(M)$ is stable under the conjugation action of $G$. 
We denote by $E(\overline{T})$ the idempotent set of $\overline{T}$. 
Observe that $E(\overline{T})$ is invariant under the conjugation action of $W$.  
Define a partial order on $E(M)$ (and thus on $E(\overline{T})\subset E(M)$) 
by declaring 
$f\leq e$ if and only if $fe=f=ef$. 

\medskip

The set $G\backslash M /G$ of $G\times G$-orbits in $M$ is finite (for $M$ is $G\times G$-spherical). 
Moreover, every $G\times G$-orbit contains an idempotent, since $M=E(M)G$ (see \cite[Theorem 4.2]{re:lam}).    
The following properties are specially important in the analysis of the $G\times G$-orbit structure of $M$. 

\begin{itemize} 
\item Any idempotent of $M$ is conjugate 
to one in $\overline{T}$ \cite[Proposition 3.13]{re:lam}. 

\item If $e,f\in E(M)$, then $GeG=GfG$ if and only if $e$ and $f$ are conjugate under $G$ \cite[Proposition 3.13]{re:lam}. 

\item If $e,f\in E(\overline{T})$ are conjugate under $G$, then they are 
conjugate under $W$ \cite[Theorem 6.25]{pu:lam}. 
\end{itemize}
Consequently, 
there are bijections
$$G\backslash M /G \longleftrightarrow E(M)/G \longleftrightarrow E(\overline{T})/W$$
given by
$$ GeG \longleftrightarrow \{geg^{-1} \, | \, g\in G\} \longleftrightarrow \{wew^{-1} \, | \, w\in W\}$$
for $e\in E(\overline{T})$. 
Here 
$E(M)/G$ 
is the set of 
$G$-conjugacy classes in $E(M)$, and 
$E(\overline{T})/W$ 
is the set of 
$W$-conjugacy classes in $E(\overline{T})$.

\smallskip 

We denote by $\Lambda$ {\em the cross section lattice} of $M$ (relative to $T$ and $B$).  
This is the subset of $E(\overline{T})$ 
defined as 
$$\Lambda:=\{e\in E(\overline{T})\,|\,Be=eBe\}.$$ 
It turns out that $\Lambda$ 
can be identified with the (finite) set $G\backslash M /G$ \cite[Theorem 4.5]{re:lam}. 
Therefore, 
$$M=\bigsqcup_{e\in \Lambda}GeG.$$
By our previous remarks, we can also identify $\Lambda$ with
the set $E(\overline{T})/W$. In the sequel, we shall use freely these identifications.

\medskip 
Next we define the Renner monoid.  
Let $R=\overline{N_G(T)}\subset M$. If $x\in R$, 
then  $x=wt$ for some $w\in N_G(T)$ and $t\in \overline{T}$. 
Hence, $xT=Tx$. In fact, one checks that $R=\{x\in M \, | \, Tx=xT\}$, cf. \cite[p. 309]{re:cell}.    
It follows that $\mathcal{R}:=R/T=T\backslash R$ 
has the unique structure of a finite monoid; 
its group of units is $W$ 
and its idempotent set is $E(\overline{T})$. 
Moreover, $\mathcal{R}\simeq E(\overline{T})\cdot W$ 
\cite[Proposition 8.1]{re:lam}. 
The monoid $\mathcal{R}$ is called the {\em Renner monoid} of $M$. 
Observe that in an expression for $x\in \mathcal{R}$ of the form $x=ew$, with $e\in E(\overline{T})$ and $w\in W$, 
the idempotent $e$ is uniquely determined, 
that is, if $x=ew=e'w'$, with $e,e'\in E(\overline{T})$ and $w,w'\in W$, then $e=e'$. 

\medskip 

For $x\in \mathcal{R}$, 
it makes sense to 
talk about the two-sided orbit 
$BxB\subset M$, because  $T\subset B$.  
Remarkably,  
there is an analogue of the Bruhat decomposition for reductive monoids, namely, 
$$M=\bigsqcup_{r\in \mathcal{R}}BrB.$$ 
See \cite[Theorem 8.8]{re:lam} for more details. 

\medskip

On the Renner monoid $\mathcal{R}$ we define the {\em Bruhat-Chevalley order} by  
$$x\leq y \;\;{\rm if \;and\; only\; if}\;\; BxB\subseteq \overline{ByB}.$$ 
The induced poset structure on $W$ coincides with the 
(classical) Bruhat-Chevalley order on $W$. 
This order on $\mathcal{R}$ extends the order on $E(\overline{T})$ 
defined a few paragraphs above. See \cite[Section 8.6]{re:lam} for details.    

\medskip 

The decomposition of $M$ into $G\times G$-orbits has 
its analogue on $\mathcal{R}$, namely,  
$$\mathcal{R}=\bigsqcup_{e\in \Lambda}WeW,$$ 
a decomposition into $W\times W$-orbits. 

\medskip 

We denote by $\mathcal{R}_k$ the set of elements of rank $k$ in $\mathcal{R}$, 
that is, 
$$\mathcal{R}_k=\{x\in \mathcal{R} \, | \, \dim{Tx}=k \,\}.$$ 
Analogously, one defines  
$\Lambda_k\subset \Lambda$ and $E_k\subset E(\overline{T})$. 

\medskip
Finally, we conclude this review by introducing some important subgroups of $G\subset M$.   
For $e\in E(M)$, 
define 
$$
P_e:=\{g\in G\,|\, ge=ege\} 
{\rm \;\; and\;\;}  
P_e^-:=\{g\in G\,|\, eg=ege\}. 
$$
It is known that $P_e$ and $P_e^-$ are opposite parabolic subgroups of $G$, 
with common Levi subgroup $C_G(e)=\{g\in G\,|\, ge=eg\}$, the centralizer of $e$ in $G$. 
In particular $C_G(e)$ is connected and reductive. 
Moreover, the unipotent radical $U_e$ of $P_e$ (resp. $U_e^-$ of $P_e^-$)   
satisfies $U_e\cdot e=\{e\}$ (resp. $e\cdot U_e^-=\{e\}$). See \cite[Theorem 4.5]{re:lam}.  
%
If $e\in E(\overline{T})$, then we write $C_W(e)$ for 
the centralizer of $e$ in $W$. In this case, 
$C_W(e)$ is the Weyl group of $T$ in $C_G(e)$ (see \cite[Section 9.5]{re:cell}). 



\subsection{Projective group embeddings.}


Let $M$ be a reductive monoid with zero and unit group $G$. 
Let 
$\epsilon:\G\to T$ be a central one-parameter subgroup, 
with image $Z$, 
such that $\lim_{t\to 0}\epsilon(t)=0$ \cite[Lemma 1.1.1]{bri:mon}.  
We denote by $\P_\epsilon(M)$ the projective 
group embedding $(M\setminus\{0\})/Z$. 
Recall that all projective embeddings of connected 
reductive groups are 
obtained by this procedure \cite{re:sem}.  
If $M$ is {\em semisimple} (i.e. $G$ has a one-dimensional center), 
then $\epsilon$ is essentially unique, and we write $\P(M)$ 
instead of $\P_\epsilon(M)$.

\begin{ex}\label{construction.ex}
Let $G_0$ be a semisimple algebraic group and 
let $\rho:G_0\to {\rm End}(V)$ be a finite dimensional 
irreducible representation of $G_0$. 
Define $Y_\rho$ to be the Zariski closure of 
$G=[\rho(G_0)]$ in $\mathbb{P}({\rm End}(V))$, 
the projective space associated with ${\rm End}(V)$.
Finally, let $X_\rho$ be the normalization of $Y_\rho$. 
By definition, $X_\rho$ is a projective embedding of $G$. 
Notice that $M_\rho$, the normalization of the Zariski closure of $\k^*\rho(G_0)$ in ${\rm End}(V)$, 
is a semisimple monoid whose group of units is $\k^*\rho(G_0)$.
Embeddings of this kind are studied in more detail in Section 4.
\end{ex}

It follows from Subsection 1.1 that the  
 $G\times G$-orbits in $\P_\epsilon(M)$ are indexed by 
$\Lambda\setminus \{0\}$. 
Similarly, 
the $B\times B$-orbits of $\P_\epsilon(M)$ are indexed by $\mathcal{R}\setminus \{0\}$. 
With these identifications, the set of 
closed $G\times G$-orbits of $\P_\epsilon(M)$ corresponds to $\Lambda_1$. 
Next is a structural description of the $G\times G$-orbits in $\P_\epsilon(M)$.

\begin{prop}\label{orbits.thm}
Let $X=\P_\epsilon(M)$ be a projective group embedding. Let $e\in \Lambda$. 
Let $H_{e}$ denote the $G\times G$-stabilizer of $[e]\in X$.     
Then 
there is a fibration sequence
$$\xymatrix{eC_G(e)/\G \ar@{^(->}[r]& (G\times G)/H_{e} \ar[r]^{\pi \; \;}& G/P_e\times G/P_e^-}.$$
In particular, if $e\in \Lambda_1$, then
$$(G\times G)/H_{e}\simeq G/P_e\times G/P_e^-,$$
for, in this case, $eMe\simeq \k$, $eC_G(e)\simeq e\times \G$ and $P_e\cdot e=\G \cdot e$.   
\end{prop}

\begin{proof}
Note that $H_e$ 
is contained in the subgroup $P_e\times P_e^-$. 
To see this, let $(g,h) \in H_e$. 
Then $geh^{-1}=ez$, for some $z\in Z\simeq \G$. 
That is, $egeh^{-1}z^{-1}=e^2$, but $e$ is an idempotent, 
so $egeh^{-1}z^{-1}=e$. The latter yields $ege=ezh$, and the 
right hand side equals $ge$, by assumption. 
Thus $ege=ge$. Analogously, one checks $eh=ehe$. 
Since $H_e\subset P_e\times P_e^-$, there is a natural map 
of homogeneous spaces $\pi: (G\times G)/H_e\to G/P_e\times G/P_e^-$, whose fiber 
is $(P_e\times P_e^-)/H_e$. 
We claim that this fibre is isomorphic to $eC_G(e)/eZ\simeq eC_G(e)/\G$.   
Indeed, first recall that $P_e=C_G(e)U_e$ and $P_e^-=C_G(e)U_e^-$. 
Moreover, $U_e\cdot e=\{e\}$ 
and $e\cdot U_e^-=\{e\}$ (Subsection 1.1). 
Also, by \cite[Theorem 4.8 (a)]{re:lam}, $eC_G(e)$ is a connected reductive group with unit $e$. 
Hence the map 
$p_e:P_e\times P_e^-\to (C_G(e)e/Ze)\times (eC_G(e)/eZ)$, sending $(g,h)$ to $([ge],[eh])$ 
is a well-defined surjective group homomorphism (cf. \cite[Proof of Proposition 2.2]{pe:mon}). 
%
By considering the corresponding morphism of Lie algebras, 
one easily checks  
that the differential of $p_e$ at $(1,1)$ 
is surjective; that is, $p_e$ is separable \cite[Theorem 4.3.7 (iii)]{sp:lag}.   
Now observe that $p_e$ maps $H_e$ onto ${\rm diag}(eC_G(e)/eZ)$. 
In consequence, $P_e\times P_e^-/H_e\simeq (eC_G(e)/eZ)\times (eC_G(e)/eZ)/{\rm diag} (eC_G(e)/eZ)\simeq eC_G(e)/eZ$.    
For the last assertion of the Proposition, 
notice that $e\in \Lambda_1$ is a minimal idempotent, so $eMe$ is a reductive 
monoid isomorphic to $\k$, with unit group $eC_G(e)\simeq \G$    
\cite[Theorem 4.8]{re:lam}. 
\end{proof}


Finally, {\em associated} to $\P_\epsilon(M)$, there is a
projective torus embedding of $T/Z$, namely, 
$\P_\epsilon(\overline{T})=[\overline{T}\setminus \{0\}]/Z.$ 
By construction, $\P_\epsilon(\overline{T})$ 
is a (normal) projective toric variety contained in $\P_\epsilon(M)$. 
Notice that the $T$-orbit structure of $\P_\epsilon(\overline{T})$ is governed by $E(\overline{T})\setminus \{0\}$.
This toric variety 
will play an important role in Sections 3 and 4. 



\section{$GKM$ data of projective group embeddings} 

%
We maintain the notation from Section 1. 
Let $X=\P_\epsilon (M)$ be a projective group embedding.   
%
%
In this section we show that  
$X$ is $T\times T$-skeletal, i.e. 
$X$ has only finitely many $T\times T$-fixed points and $T\times T$-invariant curves. 
Furthermore, 
for each $T\times T$-invariant curve of $X$ 
we obtain explicitly 
the associated character of $T\times T$. 
We write down this {\em GKM data} 
in terms of the combinatorial invariants of $M$. 
The calculations 
do not depend on any special property of $M$ or $X$. 

\smallskip
Our initial task is to identify the following two sets.
\begin{enumerate}
\item $\{x\in M\;|\; \dim TxT=1\}$.
\item $\{x\in M\;|\; \dim TxT=2\}$.
\end{enumerate}
The first class will determine the set $X^{T\times T}$, 
whereas the second one will determine the set of $T\times T$-invariant curves in $X$.

\subsection{Fixed Points}

We identify $\mathcal{R}_1$ with its image in $X=\P_\epsilon(M)$, 
and consider it 
as a subset of $X$.   
Also, $Z\subseteq T$ is the given attractive one-parameter subgroup in the center of $G$.

\begin{thm}  \label{thefixedpoints.thm}
The subsets $\mathcal{R}_1$ and $X^{T\times T}$ are equal.  
In particular, there is only a finite number of $T\times T$-fixed points in $X$.
\end{thm}
\begin{proof}
The set $X^{T\times T}$ corresponds to
$\{x\in M\;|\; \dim(TxT)=1\}$.   
Note that if $\dim(Tx)=1$, then $Tx=Zx$. Similarly, if $\dim(xT)=1$,
then $xT=Zx$.
These remarks, together with the fact that $Tx\cup xT\subseteq TxT$, yield the equality
$$\{x\in M\;|\; \dim(TxT)=1\}
= \{x\in M\;|\; Tx=xT\;\text{and}\;\dim(Tx)=1\}.$$
The latter set is precisely $\mathcal{R}_1$. 
\end{proof}

\subsection{Invariant Curves}

\begin{prop} \label{tricho.prop}
Let $x\in M$ and assume that $x\neq 0$. Then the following are equivalent.
\begin{enumerate}
\item $\dim TxT=2$.
\item Either $\dim(xT)=2$ and $Tx\subseteq xT$, $xT=TxT$; or 
      $\dim(Tx)=2$ and $xT\subseteq Tx$, $Tx=TxT$; or $\dim(TxT)=2$ and $Tx=xT=TxT$.
\end{enumerate}
\end{prop}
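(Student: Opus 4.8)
The plan is to prove the two implications separately, with (2) $\Rightarrow$ (1) being immediate and (1) $\Rightarrow$ (2) carrying all the content. For (2) $\Rightarrow$ (1) one simply reads off the dimension: in the first alternative $xT = TxT$ has dimension $2$, in the second $Tx = TxT$ has dimension $2$, and in the third $\dim TxT = 2$ is asserted outright. So the real task is to show that $\dim TxT = 2$ forces exactly one of the three configurations of containments and equalities among $Tx$, $xT$ and $TxT$.

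Before any case analysis I would record two structural facts. First, the image $Z$ of $\epsilon$ lies in the central torus and hence in $T$; since $Z$ is central in $G$ and $G$ is dense in $M$, the relation $zy = yz$ holds for all $y\in G$ and therefore, by continuity, for all $y\in M$. Thus $Z$ is central in $M$ and $Zx = xZ$. Second, because $\lim_{t\to 0}\epsilon(t)=0$, the orbit map $t\mapsto \epsilon(t)x$ extends over $t=0$ with value $0$; for $x\neq 0$ this map is non-constant (its image contains both $x$ and $0$), so $\dim Zx = 1$. Combining the two, $Zx = xZ$ is a one-dimensional orbit contained in both $Tx$ and $xT$, giving $\dim Tx \geq 1$ and $\dim xT \geq 1$; and since $Tx, xT \subseteq TxT$, both dimensions are at most $\dim TxT = 2$. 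Hence $\dim Tx, \dim xT \in \{1,2\}$.

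The crucial tool is that $T\times T$ is abelian, so inside the single orbit $TxT$ every left-$T$-orbit is a $T\times T$-translate of $Tx$ (indeed $(T\times\{1\})\cdot(g\cdot x) = g\cdot(T\times\{1\})\cdot x$), and similarly on the right; in particular all left-$T$-orbits in $TxT$ have dimension $\dim Tx$ and all right-$T$-orbits have dimension $\dim xT$. This yields the key rigidity: if $\dim Tx = \dim TxT$, then $Tx$ is a full-dimensional orbit, hence open dense in the irreducible variety $TxT$; any other left-$T$-orbit would likewise be open, yet distinct orbits are disjoint and two nonempty opens of an irreducible variety must meet, so $Tx$ is the only one and $Tx = TxT$ (symmetrically on the right). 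I then split cases: if $\dim Tx = 2$ rigidity gives $Tx = TxT \supseteq xT$, the second alternative; if $\dim xT = 2$ it gives $xT = TxT \supseteq Tx$, the first; and if both equal $2$ then $Tx = xT = TxT$, the third.

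The one remaining possibility, $\dim Tx = \dim xT = 1$, must be excluded, and this is the step I expect to be the main obstacle since it is where the central subgroup $Z$ is genuinely used. Here the same rigidity, applied to $Zx \subseteq Tx$ and to $Zx = xZ \subseteq xT$ (both inclusions of a one-dimensional orbit into a one-dimensional orbit), forces $Tx = Zx = xT$; consequently $TxT = T(xT) = T(Zx) = Zx$ has dimension $1$, contradicting $\dim TxT = 2$. This contradiction rules out the $(1,1)$ case and completes the proof.
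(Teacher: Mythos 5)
Your proposal is correct and follows essentially the same route as the paper: the paper likewise rules out the case $\dim(Tx)=\dim(xT)=1$ by observing $Tx=Zx=xT$ (forcing $\dim(TxT)=1$, a contradiction), and then concludes $Tx=TxT$ or $xT=TxT$ from equality of dimensions. The only difference is one of detail: you spell out the supporting facts (centrality of $Z$ in $M$, $\dim Zx=1$, and the open-orbit rigidity argument behind ``same dimension implies equal'') that the paper's proof invokes implicitly.
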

\begin{proof}
It suffices to check that (1) implies (2), for the other direction is obvious. 
So assume that (1) holds. Now $Tx\cup xT\subseteq TxT$. If $\dim(Tx)=\dim(xT)=1$, then
$Tx=Zx=xT$. 
But then $\dim(TxT)=1$, a contradiction. Hence at least one of $Tx$ or $xT$
is two-dimensional. If $\dim(Tx)=2$, then $Tx\subseteq TxT$ yet they have the
same dimension. Thus $Tx=TxT$. If $\dim(xT)=2$, then we end up with $xT=TxT$. 
\end{proof}

\begin{cor}   \label{tricho.cor}
Exactly one of the following assertions is true for $x\in M$ such that $\dim(TxT)=2$.
\begin{enumerate}
\item $xT\subset Tx=TxT$ and $\dim(xT)=1$.
\item $Tx\subset xT=TxT$ and $\dim(Tx)=1$.
\item $xT=Tx=TxT$. \hfill $\square$
\end{enumerate}
\end{cor}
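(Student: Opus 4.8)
The plan is to recast the trichotomy of Proposition~\ref{tricho.prop} purely in terms of the two integers $d_L=\dim Tx$ and $d_R=\dim xT$, and then to read off the three mutually exclusive cases. The essential preliminary observation is that, for $x\neq 0$, the central one-parameter subgroup $Z=\epsilon(\C^*)$ produces a one-dimensional orbit $Zx$. Indeed, $Z$ is central, so $Zx=xZ$, and since $\epsilon(t)\to 0$ forces $\epsilon(t)x\to 0$, the orbit $Zx$ cannot be a single point (otherwise $x=\lim_{t\to 0}\epsilon(t)x=0$). As $Z\subseteq T$, this gives $Zx\subseteq Tx$ and $Zx\subseteq xT$, hence $d_L\geq 1$ and $d_R\geq 1$; and since $Tx,xT\subseteq TxT$ with $\dim TxT=2$, we also have $d_L,d_R\leq 2$. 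Thus $d_L,d_R\in\{1,2\}$, and the whole statement becomes a finite case analysis.

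Next I would record two dichotomies. First, if $d_L=2$ then $Tx=TxT$: the left $T$-orbit $Tx$ is irreducible of full dimension in the irreducible set $TxT$, hence dense; moreover every left $T$-orbit in $TxT$ has the form $T(xt^{-1})=Txt^{-1}$, a right-translate of $Tx$, so it is also $2$-dimensional and dense, and since distinct orbits are disjoint there can be only one, forcing $Tx=TxT$. Symmetrically, $d_R=2$ implies $xT=TxT$. Second, if $d_L=1$ then $Tx=Zx$ (the one-dimensional $Zx$ sits inside the one-dimensional irreducible orbit $Tx$ and must coincide with it), and likewise $d_R=1$ implies $xT=Zx$. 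With these in hand I would run through the four possibilities for $(d_L,d_R)$: the pair $(1,1)$ is impossible, since it would give $Tx=Zx=xT$ and hence $TxT=Zx$ of dimension $1$, contradicting $\dim TxT=2$; the pair $(2,2)$ yields $Tx=xT=TxT$, which is assertion~3; the pair $(2,1)$ yields $Tx=TxT$ and $xT=Zx\subseteq Tx$ with $xT\neq Tx$ for dimension reasons, i.e.\ assertion~1; and $(1,2)$ gives assertion~2 by the symmetric argument.

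Finally, mutual exclusivity is immediate once the cases are tagged by their dimension pairs: assertions 1, 2, 3 correspond respectively to $(d_L,d_R)=(2,1)$, $(1,2)$, $(2,2)$, which are pairwise distinct, so no two can hold simultaneously, while the preceding paragraph shows that at least one always does. The main obstacle, and the only non-formal input, is the step asserting that a full-dimensional orbit inside the irreducible orbit $TxT$ exhausts it; this rests on the irreducibility of the single $T\times T$-orbit $TxT$ together with the homogeneity that makes all left (respectively right) $T$-orbits in $TxT$ equidimensional. Everything else is dimension bookkeeping resting on the one-dimensionality of $Zx$ established at the outset.
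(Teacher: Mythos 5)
Your proof is correct and follows essentially the same route as the paper: the corollary there is read off from Proposition~\ref{tricho.prop}, whose proof is exactly your dimension dichotomy --- ruling out $\dim(Tx)=\dim(xT)=1$ via the central one-parameter subgroup $Z$, and concluding $Tx=TxT$ (resp.\ $xT=TxT$) when the left (resp.\ right) orbit is two-dimensional. The only difference is that you spell out the sub-steps the paper leaves implicit (that $Zx$ is one-dimensional for $x\neq 0$, and that a full-dimensional $T$-orbit inside the irreducible $TxT$ must exhaust it, via translation and disjointness of orbits), which is a worthwhile but not structurally different elaboration.
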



The following result, due to Renner \cite[Lemma 3.3]{re:hpoly}, will be needed in the sequel. 
%

\begin{lem}   \label{monoidbb2.lem}
Let $M$ be a reductive monoid with zero and unit group $G$.
Let $T\subseteq G$ be a maximal torus.
Choose a central one-parameter subgroup $\epsilon:\G \to G$, with image $Z$, that converges to $0$.
Then
\[
\{x\in M\backslash\{0\}\;|\; Zx=Tx\}=\bigsqcup_{e\in E_1}eG.
\]
Consequently, if $X=\P_\epsilon(M)$ and $eX=(eM\backslash\{0\})/Z\simeq eG/Z$
then
\[
X^T=\bigsqcup_{e\in E_1}eX
\]
for the action $T\times X\to X$ given by $(t,[x])\leadsto [tx]$. Similar
results hold for the right action $([x],t)\leadsto [xt]$ of $T$ on $X$. \hfill $\square$
\end{lem}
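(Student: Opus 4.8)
The plan is to establish the two displayed equalities in turn, the first being the substantive one and the second a formal consequence of it.

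\emph{Reduction to a dimension condition.} First I would replace the hypothesis $Zx=Tx$ by the equivalent condition $\dim Tx=1$ (for $x\neq 0$). Since $Z\subseteq T$ one always has $Zx\subseteq Tx$, so only the reverse inclusion is at stake. Because $\epsilon$ converges to $0$ and $x\neq 0$, the subgroup $Z$ cannot fix $x$ (otherwise $0=\lim_{t\to 0}\epsilon(t)x=x$); hence the stabilizer of $x$ in $Z\cong\C^*$ is finite and $\dim Zx=1$. If moreover $\dim Tx=1$, then, writing $T_x$ for the stabilizer of $x$ in $T$, the orbit $Tx\cong T/T_x$ is a one-dimensional torus and the composite $Z\hookrightarrow T\twoheadrightarrow T/T_x$ is a nontrivial homomorphism $\C^*\to\C^*$, hence surjective; surjectivity means $Z$ and $T$ have the same orbit through $x$, i.e. $Zx=Tx$. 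Conversely $Zx=Tx$ forces $\dim Tx=\dim Zx=1$. Thus $\{x\in M\setminus\{0\}\mid Zx=Tx\}=\{x\in M\setminus\{0\}\mid \dim Tx=1\}$, and it remains to identify this locus with $\bigsqcup_{e\in E_1(\overline T)}eG$.

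\emph{The easy inclusion and disjointness.} For $e\in E_1(\overline T)$ and $g\in G$, right multiplication by $g$ is an automorphism of $M$, so $\dim T(eg)=\dim\big((Te)g\big)=\dim Te=1$; hence $\bigsqcup_e eG$ is contained in the rank-one locus. For disjointness, suppose $eG\cap e'G\neq\emptyset$ with $e,e'\in E(\overline T)$. Then $e\in e'G$ and $e'\in eG$, say $e=e'h$ and $e'=ek$; multiplying these on the left by $e'$ and $e$ respectively gives $e'e=e$ and $ee'=e'$. Since $\overline T$ is commutative, $e'e=ee'$, whence $e=e'$. Therefore the union is disjoint.

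\emph{The reverse inclusion (the crux).} Given $x\neq 0$ with $\dim Tx=1$, the goal is to write $x=eg$ with $e\in E_1(\overline T)$ and $g\in G$. My approach is to first locate a rank-one idempotent $e\in E_1(\overline T)$ with $ex=x$, and then upgrade the membership $x\in eM$ to $x\in eG$. For the first step I would fix a closed embedding of algebraic monoids $M\hookrightarrow \mathrm{End}(V)$ with $T$ acting diagonally through a weight basis of $V$; the condition $\dim Tx=1$ says precisely that the $T$-weights indexing the nonzero rows of $x$ span a rank-one subgroup of $\Xi(T)$, and the corresponding face of the cone defining $\overline T$ singles out an idempotent $e\in E_1(\overline T)$ with $ex=x$. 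For the second step I would exploit that $e$ has rank one, so that $eMe\simeq\C$ and, by Proposition \ref{orbits.thm}, the closed orbit $G[e]G$ is $G/P_e\times G/P_e^-$, to show that $G$ acts transitively on $eM\setminus\{0\}$, i.e. $eM\setminus\{0\}=eG$; combined with $x\in eM\setminus\{0\}$ this yields $x\in eG$. I expect this transitivity, together with the verification that the face produced above really lies in $\overline T$ relative to the \emph{fixed} torus $T$ and has rank exactly one, to be the main obstacle: it is here that the rank-one structure theory of reductive monoids enters in an essential way.

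\emph{The consequence for $X^T$.} Finally, passing to the quotient $X=(M\setminus\{0\})/\C^*$, a point $[x]$ is fixed by $T$ if and only if $Tx\subseteq\C^* x=Zx$, that is, if and only if $Zx=Tx$. Hence $X^T$ is the image of $\bigsqcup_{e\in E_1(\overline T)}eG$ under the quotient map, and since $Z$ is central it stabilizes each piece $eG$; using the stated identification $eX\simeq eG/Z$ one concludes $X^T=\bigsqcup_{e\in E_1(\overline T)}eX$. The assertion for the right action follows by the symmetric argument.
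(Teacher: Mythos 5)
Your reduction of $Zx=Tx$ to $\dim Tx=1$, your easy inclusion, your disjointness argument, and your deduction of the statement about $X^T$ are all correct. But the heart of the lemma is the reverse inclusion, and there your sketch has a genuine gap, exactly at the point you flag as the ``main obstacle.'' You assert that, since the weights $S_x=\{\chi\;|\;p_\chi x\neq 0\}$ span a rank-one subgroup of $\Xi(T)$, ``the corresponding face of the cone defining $\overline T$'' produces an idempotent $e\in E_1(\overline T)$ with $ex=x$. However, rank-one idempotents of $\overline T$ correspond to \emph{one-dimensional faces} (extremal rays) of the cone generated by \emph{all} weights of $V$, and a set of weights lying on a single ray need not lie on an extremal ray: it can sit in the interior of the cone. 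For instance, let $G=\C^*\times SL_2$ act on $V=\mathrm{Sym}^2(\C^2)$ twisted by the standard character of $\C^*$; the weights are $z+2\omega$, $z$, $z-2\omega$, and the middle weight $z$ spans a rank-one subgroup while the smallest face of the cone containing it is the whole two-dimensional cone. An endomorphism of $V$ whose image lies in $V_z$ has $\dim Tx=1$ but admits no $e\in E_1(\overline T)$ with $ex=x$. What saves the lemma is that no such endomorphism belongs to $M$ (in this example the image of any rank-one element of $M$ is a line of perfect squares, never $\C\,uv$) --- but ruling this out is precisely the content of the lemma, so your first step begs the question: the monoid structure of $M$, not just the $T$-module structure of $V$, must enter.

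This is where the paper's proof differs essentially. It writes $x=brb'$ via the Bruhat decomposition for reductive monoids, with $b,b'\in B$ and $r=fw\in\mathcal R$, $f\in E(\overline T)$, $w\in W$; uses the relation $fbf=fc$ for some $c\in C_B(f)$ to conclude $fy\in fG$ for $y=xb'^{-1}$; and then invokes Renner's result (\cite{re:lam}, Proposition 3.22) that an element of $fG$ has left $T$-orbit of dimension equal to the rank of $f$, which forces $f\in E_1$ on pain of contradicting $Zy=Ty$. A stabilizer computation then yields $ebe=be$, hence $y=ey\in eG$ and $x=yb'\in eG$. Your second sub-step (transitivity $eM\setminus\{0\}=eG$) is true, but it also does not follow merely from Proposition \ref{orbits.thm}; one needs Green's relations and the stability of reductive monoids (every nonzero element of $eM$ is $\mathscr J$-equivalent to $e$ because $e$ has rank one, and the $\mathscr R$-class of $e$ is $eG$). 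In short, the scaffolding of your proof is sound and the easy parts are complete, but the core implication is unproved, and the tool actually needed --- the monoid Bruhat decomposition combined with Renner's rank computations --- is absent from the sketch.
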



\begin{thm}      \label{thecurves1.thm} 
There are only three types of closed irreducible $T\times T$-curves in $X$, namely: 
\begin{enumerate}
\item $\overline{U_\alpha [ew]}$, where $e\in E_1$, $s_\alpha\notin C_W(e)$ and $w\in W$ (fixed pointwise by $T$ on the right).
\item $\overline{[we]U_\alpha}$, where $e\in E_1$, $s_\alpha\notin C_W(e)$ and $w\in W$ (fixed pointwise by $T$ on the left).
\item $\overline{T[x]}=\overline{[x]T}=\overline{T[x]T}$, where $x\in\mathcal{R}_2=\{x\in\mathcal{R}\;|\;\dim(Tx)=2\}$.
\end{enumerate}
In particular, the number of $T\times T$-invariant curves in $X$ is finite. 
\end{thm}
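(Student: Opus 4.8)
The plan is to reduce the classification to the analysis of two-dimensional $T\times T$-orbits in the monoid $M$ and then feed it into the trichotomy of Corollary \ref{tricho.cor}. A closed irreducible $T\times T$-curve in $X$ is the closure of a one-dimensional $T\times T$-orbit. The quotient map $M\setminus\{0\}\to X$ collapses the one-dimensional orbits $Zx$ of the central subgroup $Z\subseteq T$, so the orbit $(T\times T)\cdot[x]$ has dimension $\dim(TxT)-1$. Hence the curves correspond precisely to the points $[x]$ with $x\in M\setminus\{0\}$ and $\dim(TxT)=2$. First I would record this dictionary, and then apply Corollary \ref{tricho.cor}, which separates $\{x\,|\,\dim(TxT)=2\}$ into the three mutually exclusive classes: (1) $xT\subset Tx=TxT$ with $\dim(xT)=1$; (2) $Tx\subset xT=TxT$ with $\dim(Tx)=1$; and (3) $Tx=xT=TxT$. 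I expect these to yield exactly the three announced types.

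For class (2), where $\dim(Tx)=1$, one has $Tx=Zx$, so $[x]$ is fixed by the left $T$-action; Lemma \ref{monoidbb2.lem} then places $x$ in $eG$ for some rank-one idempotent $e\in E_1(\overline{T})$. Since rank-one idempotents index the minimal nonzero $\mathcal{J}$-classes, $G[e]G$ is a closed $G\times G$-orbit, and by Proposition \ref{orbits.thm} it is $T\times T$-equivariantly isomorphic to the product of flag varieties $G/P_e\times G/P_e^-$. A one-dimensional $T\times T$-orbit in such a product is a point in one factor and a one-dimensional orbit in the other; left-$T$-fixedness forces the curve into the factor carrying the right action. Carrell's Lemma \ref{carrell.thm}, applied to that factor, then exhibits the curve as $\overline{[we]U_\alpha}$ joining $[we]$ to $[wes_\alpha]$, subject to $s_\alpha\notin C_W(e)$ (using $C_W(e)=W_{C_G(e)}$, the Weyl group of the common Levi of $P_e$ and $P_e^-$). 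Class (1) is entirely symmetric, via the right-hand analogue of Lemma \ref{monoidbb2.lem} and the factor $G/P_e$, yielding the type 1 curves $\overline{U_\alpha[ew]}$.

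Class (3) is immediate: $Tx=xT=TxT$ says $x\in R=\overline{N_G(T)}$, so its image in $\mathscr{R}=R/T$ lies in $\mathscr{R}_2$, and $\overline{(T\times T)[x]}=\overline{T[x]}=\overline{[x]T}$ is the type 3 curve. Finiteness then follows from three finite inputs: $E_1(\overline{T})$ is finite, each $G/P_e\times G/P_e^-$ has only finitely many $T\times T$-curves by Carrell's Lemma, and $\mathscr{R}_2$ is finite because the Renner monoid is.

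The main obstacle is the bookkeeping in classes (1) and (2): correctly invoking Lemma \ref{monoidbb2.lem} to pin $[x]$ inside a single piece $eG$, checking that rank-one idempotents really do give closed orbits so that Proposition \ref{orbits.thm} applies, and keeping the left/right conventions straight so that Carrell's curves emerge in exactly the stated form with nondegeneracy condition $s_\alpha\notin C_W(e)$. Establishing that types 1 and 2 sit inside closed $G\times G$-orbits while type 3 does not is the conceptual crux of the classification.
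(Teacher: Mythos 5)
Your proposal is correct and follows essentially the same route as the paper: the trichotomy of Corollary \ref{tricho.cor}, Lemma \ref{monoidbb2.lem} to place the curves fixed pointwise by $T$ on one side inside $\bigsqcup_{e\in E_1(\overline{T})} eG/Z$, Carrell's Lemma \ref{carrell.thm} to identify them as $\overline{U_\alpha[ew]}$ and $\overline{[we]U_\alpha}$ with $s_\alpha\notin C_W(e)$, and the identification of the remaining class with $\mathscr{R}_2$. The only cosmetic difference is that you route classes (1) and (2) through the closed orbit $G[e]G\simeq G/P_e\times G/P_e^-$ of Proposition \ref{orbits.thm} and project to a factor, whereas the paper applies Carrell's lemma directly to $eG/Z=G/P_e$.
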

\begin{proof}
Keeping the numeration of Corollary \ref{tricho.cor}, 
we know that the $T\times T-$curves of $X$ fall into three classes.
The first two types correspond, as Lemma \ref{monoidbb2.lem} dictates, to curves that are fixed pointwise
by $T$ on either the left or the right. The former collection lies on 
$X^T=\bigsqcup_{e\in E_1}eG/Z$. Moreover,
due to the Bruhat decomposition, for each $e\in E_1$ the following identity holds
$$eG/Z=G/{P_e^-}=\bigsqcup_{r\in eW}[r]B_u,$$ where $B_u$ is the unipotent radical of $B$.

Our task is to find all the $T$-curves of $eG/Z$, where $e$ varies over all the rank-one 
idempotents of $\overline{T}$.
So fix an idempotent $e\in E_1$. 
It follows from the results of Carrell (\cite{c:schu}, \cite[Lemma 2.2]{ck:curves}), 
that the $T$-curves of $eG/Z$ are of the form
$[r]U_\alpha$, for some root $\alpha$ 
such that $s_\alpha \notin C_W(f)$ and $f=w^{-1}ew$ (here, $r=ew=wf$).  
Indeed, since $f$ is a rank-one idempotent,
then $s_\alpha\in C_W(f)$ if and only if $U_\alpha f=fU_\alpha=\{f\}$ \cite[Lemma 5.1]{re:cell}.
Because there is no essential difference
between $e$ and $f$, we conclude that a
$T\times T$-curve, $TxT$, is fixed pointwise on the left by $T$ if and only if $TxT=wfU_\alpha$,   
where $\alpha \notin C_W(f)$, $f\in E_1$, and $w\in W$. 
A similar argument disposes of the case when a $T\times T$-curve is fixed pointwise by $T$ on the right.

Finally, if $Tx=xT=TxT$ and ${\rm dim}(Tx)=2$, then $x\in \mathcal{R}_2$. Identifying $x\in \mathcal{R}_2$
with its image $[x]$ in $X$, it is clear that $T[x]T$ is a $T\times T$-curve in $X$.
%
\end{proof}

Let us state Theorem \ref{thefixedpoints.thm} and Theorem \ref{thecurves1.thm} in a more compact form.

\begin{thm}\label{ratsmisgeneric.thm}
Let $X=\P_\epsilon(M)$ be a projective group embedding. Then,  
for its natural $T\times T$-action,  
$X$ 
is $T\times T$-skeletal. \qed   
\end{thm}


\subsection{Classification of $GKM$-curves}
Let $X=\mathbb{P}_\epsilon(M)$ be a projective group embedding.  
From Theorem \ref{thecurves1.thm}, we also know that there 
are three types of $T\times T$-curves in $X$:
\begin{enumerate}
\item Curves that are fixed pointwise by $T$ on the right: 
      $\overline{U_\alpha [ew]}$, $e\in E_1$, $s_\alpha\notin C_W(e)$, and $w\in W$.
\item Curves that are fixed pointwise by $T$ on the left: 
      $\overline{[we]U_\alpha}$, $e\in E_1$, $s_\alpha\notin C_W(e)$,  and $w\in W$. 
\item $\overline{T[x]}=\overline{[x]T}=\overline{T[x]T}$ where $x\in\mathcal{R}_2=\{x\in\mathcal{R}\;|\;\dim(Tx)=2\}$.
\end{enumerate}

But which pair of fixed points, i.e. elements of
$\mathcal{R}_1$, is joined by each of these curves?
Preserving the given order, we obtain
\begin{enumerate}
\item   $ew$ and $s_\alpha ew$
\item  $we$ and $wes_\alpha$
\item  The two elements $r,s\in\mathcal{R}_1$ such that $r,s\in \overline{TxT}$. 
\end{enumerate}

\begin{thm}      \label{thecurves2.thm}
The set of $T\times T$ - curves in $X=\mathbb{P}_\epsilon(M)$ is identified
as follows, by pairs of $T\times T$-fixed points. Here $Ref(W)$ refers to the set of
reflections of $W$, the ordering on $\mathcal{R}$ is the Bruhat-Chevalley order (Section 1.1). 
\begin{enumerate}
\item $\{(x,sx)\;|\; x\in\mathcal{R}_1, s\in Ref(W)\;\text{and}\;x > sx\}$.
\item $\{(x,xs)\;|\; x\in\mathcal{R}_1, s\in Ref(W)\;\text{and}\;x > xs\}$.
\item $\mathcal{R}_2\cong\{A\subseteq\mathcal{R}_1\;|\; |A|=2\;\text{and}\;
A=\{ex,fx\}\;\text{for some}\;e,f\in E_1\;\text{and some}\;
x\in\mathcal{R}_2\}$.
\end{enumerate}
\end{thm}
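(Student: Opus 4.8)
The plan is to prove Theorem~\ref{thecurves2.thm} by matching each of the three curve types from Theorem~\ref{thecurves1.thm} with the correct pair of endpoints in $\mathscr{R}_1$, using the Bruhat order on $\mathscr{R}$ and the known combinatorics of the flag-variety fibers $G/P_e$. The three parts are essentially independent, so I would dispatch them one at a time.

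For parts (1) and (2), I would begin from the curves $\overline{U_\alpha[ew]}$ (resp.\ $\overline{[we]U_\alpha}$). By Lemma~\ref{carrell.thm}, applied inside the closed orbit $eG/Z \simeq G/P_e$ (which contains the curve, since type-1 and type-2 curves lie in closed $G\times G$-orbits), such a curve has exactly two $T$-fixed endpoints, namely $[ew]$ and $[s_\alpha ew]$. After the identification $\mathscr{R}_1 \hookrightarrow X$, these correspond to the Renner-monoid elements $ew$ and $s_\alpha ew$. Writing $x = ew \in \mathscr{R}_1$ and $s = s_\alpha \in Ref(W)$, the unordered pair $\{ew, s_\alpha ew\} = \{x, sx\}$ is recorded, and the ambient Borel fixes an order so that exactly one of $x < sx$ or $sx < x$ holds; normalizing to $x < sx$ gives the stated set. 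The key point to verify here is that every reflection $s \in Ref(W)$ arises (not merely the simple ones), which follows because Lemma~\ref{carrell.thm} ranges over \emph{all} roots $\alpha \in \Phi$, and that the parametrization $(x,s) \mapsto \{x,sx\}$ is a bijection onto type-1 curves; distinctness of the two endpoints is guaranteed precisely by the condition $s_\alpha \notin C_W(e)$. Part (2) is the mirror image, replacing left multiplication by right multiplication.

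For part (3), the curves are of the form $\overline{Tx} = \overline{xT} = \overline{TxT}$ with $x \in \mathscr{R}_2$, so I would show the assignment $x \mapsto \{$the two $T\times T$-fixed points in $\overline{TxT}\}$ is a bijection between $\mathscr{R}_2$ and the indicated collection of two-element subsets of $\mathscr{R}_1$. Since $\overline{TxT}$ is a one-dimensional complete $T$-variety (a $T$-orbit closure in a projective toric-type situation), its closure contains exactly two $T$-fixed points; these are the limits $\lim_{t\to 0}$ and $\lim_{t\to\infty}$ of the torus action, and each is a rank-one element of $\mathscr{R}$. To identify them concretely as $\{ex, fx\}$, I would use the structure of $\overline{T}$: multiplying $x \in \mathscr{R}_2$ by the rank-one idempotents $e, f \in E_1(\overline{T})$ lying below the relevant rank-two idempotent produces the two boundary fixed points, exactly as in Lemma~\ref{monoidbb2.lem} where multiplication by $e \in E_1$ drops the rank. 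The main obstacle here is verifying that $\overline{TxT}$ indeed has precisely two fixed points and that they take the form $ex, fx$ for suitable $e, f \in E_1(\overline{T})$ below a common rank-two idempotent; this is where the toric geometry of $\overline{T}$ and the fact that $x$ has rank two (so $Tx$ is a two-dimensional torus orbit) must be combined carefully.

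Overall, parts (1) and (2) reduce cleanly to Carrell's lemma once one knows the curves live in the flag-variety factors, so the genuine work is concentrated in part (3). I expect the crux to be the explicit determination of the two endpoints of $\overline{TxT}$ as $\{ex, fx\}$ and the verification that this correspondence is a bijection; this requires understanding how the rank-two torus orbit $Tx$ degenerates under the limits of the $\epsilon$-action, and I would lean on the toric description of $\overline{T}$ together with the rank-dropping behavior established in Lemma~\ref{monoidbb2.lem}.
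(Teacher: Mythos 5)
Your proposal is correct and takes essentially the same route as the paper: parts (1) and (2) follow from Carrell's lemma (via Theorem \ref{thecurves1.thm}) together with the Bruhat--Chevalley order on $\mathscr{R}$ (for $s\in Ref(W)$ and $x\neq sx$, exactly one of $x<sx$ or $sx<x$ holds), and part (3) rests on the toric geometry of the rank-two piece, with the two endpoints given by the rank-one idempotents below the rank-two idempotent. For the crux you flag in (3), the paper argues slightly more crisply than your sketch: writing $x=fu$ with $f\in E_2(\overline{T})$ and $u\in W$ invertible reduces everything to $x=f$, and then $(f\overline{T}\setminus\{0\})/\C^*\simeq\CP^1$ (\cite{bri:rat}, Corollary 1.4.1) has exactly two fixed points, namely the classes of the two rank-one idempotents $e,e'$ with $ef\neq 0$ and $e'f\neq 0$ --- no appeal to Lemma \ref{monoidbb2.lem} is needed.
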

\begin{proof}
Assertions (1) and (2)  
follow from the fact that if $x\neq sx$ and $s\in Ref(W)$, then either $x < sx$ or
else $sx < x$. Indeed, first write $x$ in normal form \cite[Definition 8.34]{re:lam}, 
that is, write $x=veu$, where $e\in \Lambda_1$ and $v,u$ are 
minimal length coset representatives in $W/C_W(e)$,    
then use \cite[Corollary 8.35]{re:lam}.    
For (3) we proceed as follows. Recall that any $x\in \mathcal{R}_2$
can be written as $x=fu$, where $f\in E_2$ is a rank-two idempotent, and $u\in W$.
Since $u$ is invertible, it is enough to prove the statement for $x=f$. Now notice that 
$(f\overline{T}\setminus \{0\})/Z$ is isomorphic to $\P^1$ (see e.g. \cite[Corollary 1.4.1]{bri:rat}).  
Thus there are exactly two
fixed points, they correspond to the unique rank-one idempotents $e,e' \in E_1$
such that $ef\neq 0$ and $e'f\neq 0$. These two idempotents determine $f$ uniquely (to see this, 
simply represent $\overline{T}$ as a closed submonoid of ${\rm End}(\k^n)$ 
consisting of diagonal matrices). Finally, note that the pair $(e,e')$ cannot be any of the 
ones indexed in type (1) or (2) above, for otherwise either $e'=se$ or $e'=es$, where $s$ is a reflection and $s\notin C_W(e)$. 
But then $se \in E_1$ or $es\in E_1$, and any of these would imply $se=es=e$, 
by the uniqueness of the decomposition in the Renner monoid, 
a contradiction. 
\end{proof}

Notice that the description in  (3) above is just a convenient, indirect way of identifying
the elements of $\mathcal{R}_2$ as pairs of $T\times T$ - fixed points. Notice also that, for each
$x\in \mathcal{R}_2$, there are exactly two idempotents $e,f\in E_1$ such that $ex\neq 0$
and $fx\neq 0$.

\smallskip

Any $T\times T$-fixed point of $X$ is contained in a closed $G\times G$-orbit (indeed, recall that  
$\mathcal{R}_1=\bigsqcup_{e\in \Lambda_1}WeW$, and for each $e\in \Lambda_1$, the orbit $WeW$  
is identifiable with the $T\times T$-fixed points of the complete homogeneous space $G[e]G$).   
The curves identified
in (1) and (2) of Theorem \ref{thecurves2.thm} are the ones that are contained in closed
$G\times G$-orbits. The curves identified in (3) of Theorem \ref{thecurves2.thm} are those
that are not contained in any closed $G\times G$-orbit. 
As in \cite{bri:bru}, these curves are further separated 
into whether or not the corresponding fixed points are in the same
closed $G\times G$-orbit (Lemma \ref{thetaforex3.lem}).  
This distinction will become relevant in the next section when we
identify the character associated with each $T\times T$-curve of type (3). 

\begin{ex}   \label{thecurvesofmn.ex}
We illustrate Theorem \ref{thecurves2.thm} with the example $M=M_n(\k)$. Let $E_{i,j}$ denote an
elementary matrix. We then obtain (with the ordering as in Theorem \ref{thecurves2.thm})
\begin{enumerate}
\item $\{(E_{i,j},E_{k,j})\;|\; i\neq k\}$.
\item $\{(E_{i,j},E_{i,k})\;|\; j\neq k\}$.
\item $\{(E_{i,j},E_{k,l})\;|\; i\neq k\;\text{and}\; j\neq l\}$.
\end{enumerate}
In each case the associated curve is the $T\times T$-orbit of the sum of the
given pair of elementary matrices. In case (1) the two elementary matrices are in the
same row. In case (2) the two elementary matrices are in the same column. Case (3) 
determines the remaining cases.
\end{ex}

\subsection{The Associated Characters} 
Let us briefly recall how a character is 
associated with a $T$-stable curve. 
Let $X$ be a complete $T$-variety and let 
$C$ be a $T$-stable irreducible curve of $X$, 
which is not fixed pointwise by $T$. 
Let $\pi:\tilde{C}\to C$ be the ($T$-equivariant) normalization.  
Then $\tilde{C}$ is isomorphic to $\P^1$. 
Denote by $0,\infty$ the two fixed points 
of $T$ in $\tilde{C}$, and denote by $x_0,x_\infty$ 
their corresponding images 
via $\pi$. 
Then 
$\tilde{C}\setminus \{0,\infty\}=C\setminus \{x_0,x_\infty\}$ 
identifies to $\k^*$, where $T$ acts on 
$\tilde{C}\setminus \{0,\infty\}$ 
via a unique character $\chi$ 
(when interchanging $0$ and $\infty$, 
one replaces $\chi$ by $\chi^{-1}$). 

\smallskip

In this Subsection we identify the character $\theta_x=(\lambda_x,\rho_x)$ of $T\times T$ associated
with a $T\times T$-curve $\overline{T[x]T}$ in $X=\P_\epsilon(M)$. 
As discussed previously
(Theorems \ref{thecurves1.thm} and \ref{thecurves2.thm}), there are three different types of $T\times T$-curves in $X$.
For the curves of type (1) and (2) 
the whole issue reduces to the well-documented situation of \cite{c:schu} and \cite[Lemma 2.2]{ck:curves},  
since these curves are contained in the closed $G\times G$-orbits of $X$ (such closed orbits are complete homogeneous spaces, Proposition \ref{orbits.thm}). Hence,    
\begin{enumerate}
\item For a $T\times T$-curve $\overline{U_\alpha [ew]}$, with $e\in E_1$, $s_\alpha\notin C_W(e)$, and $w\in W$, 
the associated character is $(\alpha,1)$. 
\item For a $T\times T$-curve $\overline{[we]U_\alpha}$, with $e\in E_1$, $s_\alpha\notin C_W(e)$,  and $w\in W$, 
the associated character is $(1,\alpha)$.  
\end{enumerate}
      
Consequently, we only need to focus on the curves of type (3), that is, those of the form $\overline{T[x]T}$, with $x\in\mathcal{R}_2$. 

\smallskip

So let $x\in\mathcal{R}_2$. Since we are working on the monoid level,
the initial step in our discussion is to calculate the map
\[
m_x : T\times T \to TxT,\;\;(s,t)\leadsto sxt^{-1}.
\]
We then compose $m_x$ with the canonical map $\pi_x : TxT\to TxT/Z\cong \G$ to obtain
\[
\theta_x=\pi_x\circ m_x
\]
where $Z\subseteq G$ is the given central, attractive, 1-parameter
subgroup of the unit group $G$ of $M$. Notice that
$\theta_x$ depends on the choice of group isomorphism $TxT/Z\cong \G$. The other
isomorphism $TxT/Z\cong \G$ yields $\theta_x^{-1}$. In the calculation of
$\theta_x$ it is important to keep
track of this ambiguity. It is also useful to consider the map
\[
\tau_x : T\to Tx, t\leadsto tx
\]
and the character $\lambda_x=\pi_x\circ \tau_x$. Notice that $TxT=Tx$, so we wish to express
$\theta_x : T\times T \to \G$ as a composition
\[
T\times T \to T\times T\to T\to Tx\to \G
\]
involving the $W\times W$-action on $T\times T$, the multiplication $T\times T\to T$, and these other
quantities: $\tau_x$, $\pi_x$, $\lambda_x$.
We also assess how the $W\times W$-action on $\mathcal{R}_2$ affects     
the characters associated to the curves $\overline{T[x]T}$, $x\in \mathcal{R}_2$.  
This will effectively reduce the calculation of 
$\theta_x$, with  $x\in\mathcal{R}_2$, 
to calculating $\theta_x$ for a set of representatives
of the $W\times W$-orbits of $\mathcal{R}_2$.

\smallskip

%
Write $x$ as $x=fu=ug$, where $u\in W$ and $f,g\in E_2$. 
An elementary calculation yields that
\[
m_x : T\times T \to TxT=xT,\;\;(s,t)\leadsto sxt^{-1}
\]
is given by $m_x(s,t)=s(t^u)^{-1}x$ where, by definition, $t^u=utu^{-1}$. Recall that
$\lambda_x=\pi_x\circ \tau_x$, where $\tau_x : T\to Tx, t\leadsto tx$, and $\pi_x : TxT\to TxT/Z\cong \G$.

\begin{lem} \label{thetaforex1.lem}
Write $\theta_x = (\lambda_x,\rho_x)\in \Xi\oplus \Xi$, where $\Xi\oplus \Xi$ is the character group of $T\times T$.  
Then
\begin{enumerate}
  \item $\lambda_x=\lambda_f$.
  \item $\rho_x=\lambda_g^{-1}=\lambda_f^{-1}\circ int(u)$, where $int(u)(t)=utu^{-1}$.
\end{enumerate}
\end{lem}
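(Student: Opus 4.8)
The plan is to read both coordinates of $\theta_x$ off the formula $m_x(s,t)=st^u x$ obtained just above, using only that $\pi_x$ restricts on the orbit $Tx$ to the character $\lambda_x=\pi_x\circ t_x$ (the trivialization $Tx/Z\cong\C^*$ being normalized so that the base point $x$ maps to $1$, which is what makes $\lambda_x$ a genuine homomorphism $T\to\C^*$). First I would rewrite $st^u x=t_x(st^u)$, so that
$$
\theta_x(s,t)=\pi_x\big(m_x(s,t)\big)=\lambda_x(st^u)=\lambda_x(s)\,\lambda_x(t^u)=\lambda_x(s)\,(\lambda_x\circ{\rm int}(u))(t),
$$
the last step using $t^u={\rm int}(u)(t)$. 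Comparing with $\theta_x=(\lambda_x,\rho_x)$ this already gives $\rho_x=\lambda_x\circ{\rm int}(u)$, so the second equality in (2) will follow immediately once (1) is established.

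Next I would identify $\lambda_x$ with $\lambda_f$. Since $x=fu$, right multiplication by a representative of $u$ defines a morphism $Tf\to Tfu=Tx$, $\,tf\mapsto tfu=tx$, carrying the base point $f$ to the base point $x$; as $u$ normalizes $T$ this is an isomorphism equivariant for the left $T$-action, and its composite with $t_f$ is $t_x$. It descends to an isomorphism $Tf/Z\xrightarrow{\sim}Tx/Z$. Fixing the trivialization of $Tx/Z$ first and transporting it to $Tf/Z$ through this descent, one obtains $\pi_x(tx)=\pi_f(tf)$ for all $t$, i.e. $\lambda_x=\lambda_f$, which is (1); combined with the previous paragraph this yields $\rho_x=\lambda_f\circ{\rm int}(u)$.

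For the remaining equality $\rho_x=\lambda_g$ I would use the second factorization $x=ug$. The short computation $t^u x=(utu^{-1})(ug)=utg$ shows $\rho_x(t)=\pi_x(t^u x)=\pi_x(u\cdot tg)$, and now left multiplication by $u$ is an isomorphism $Tg\to uTg=Tug=Tx$ equivariant for the right $T$-action and sending the base point $g$ to $x$. Transporting the trivialization of $Tx/Z$ back along it gives $\pi_x(utg)=\pi_g(tg)=\lambda_g(t)$, hence $\rho_x=\lambda_g$.

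The one point that needs care --- and which I expect to be the only real obstacle --- is the bookkeeping of the identifications $TxT/Z\cong\C^*$, each of which is canonical only up to inversion (the ambiguity already noted, which replaces $\theta_x$ by $\theta_x^{-1}$). To make the three displayed equalities hold on the nose rather than merely up to sign, I would fix the trivialization of $Tx/Z$ once and \emph{define} those of $Tf/Z$ and $Tg/Z$ by transport through the two canonical isomorphisms above. This is consistent precisely because $f$, $g$ and $x$ are tied together by the single relation $x=fu=ug$: the left identification $Tf/Z\cong Tx/Z$ and the right identification $Tg/Z\cong Tx/Z$ are intertwined by the conjugation isomorphism $Tf/Z\cong Tg/Z$, $y\mapsto u^{-1}yu$, which is exactly the one matching $f$ with $g$, so the two normalizations never conflict.
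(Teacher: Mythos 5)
Your proposal is correct and follows essentially the same route as the paper: both arguments rest on the formula $m_x(s,t)=st^{u}x$ and on moving the computation between $Tf$, $Tg$ and $Tx$ via right/left multiplication by $u$ (using $x=fu=ug$ and $f=ugu^{-1}$). The only difference is bookkeeping of the sign ambiguity --- the paper compares kernels of the composite maps $\pi_f\circ m$ and $\pi_x\circ m_x$, which identifies the (unoriented) characters directly, whereas you pin down orientations by transporting normalized trivializations; both handle the inversion ambiguity correctly and yield the same conclusion.
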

\begin{proof}
Consider $m : T\times T \to Tf,\;\;(s,t)\leadsto s(t^u)^{-1}f$. Then $m(s,t)\in Zf$
if and only if $m_x(s,t)\in Zx$. So $ker(\pi_f\circ m)=ker(\pi_x\circ m_x)$.
Thus $\lambda_x=\lambda_f$ and $\rho_x=\lambda_f^{-1}\circ int(u)$. 
On the other hand, $m$ is also the product of $(s,1)\leadsto sf$ and $(1,t)\leadsto (t^u)^{-1}f$.
The first of these is $\lambda_f$ and the second of these is
$\lambda_f^{-1}\circ int(u)$. Clearly, $(t^u)^{-1}f\in Zf$ if and only if $t^{-1}g\in Zg$, 
because $ugu^{-1}=f$. Thus $ker(\lambda_x^{-1}\circ int(u))=ker(\lambda_g^{-1})$. We conclude that
$\theta_x=(\lambda_x,\rho_x)=(\lambda_f,\lambda_g^{-1})=(\lambda_f,\lambda_f^{-1}\circ
int(u))$. 

It is worth noting that we can also write $m_x$ as $m_x : T\times T \to TxT=xT$, \; 
$m_x(s,t)=sxt^{-1}=xs^{u^{-1}}t^{-1}$ (notice that $x$ now appears on the left). 
The resulting calculation then yields 
$\theta_x=(\lambda_x,\rho_x)=(\lambda_g\circ int(u^{-1}),\lambda_g^{-1})=(\lambda_f,\lambda_g^{-1})$. 
\end{proof}

Observe that either $\theta_x=(\lambda_x,\lambda_x^{-1}\circ int(u))$ or
$\theta_x=(\lambda_x^{-1},\lambda_x\circ int(u))$ depending on the orientation.
Also, it follows from Lemma \ref{thetaforex1.lem} that if $f,g \in E_2$ are conjugate, 
i.e. $f=wgw^{-1}$ for some $w\in W$, then $\lambda_f=\lambda_g\circ int (w^{-1})$.

\begin{lem} \label{thetaforex2.lem}
Let $x\in\mathcal{R}_2$. 
Let $\theta_x=(\lambda_x,\rho_x)$ be the associated character, as in Lemma \ref{thetaforex1.lem}. 
\begin{enumerate}
  \item If $y=xw$, where $w\in W$, 
  then $\theta_y=(\lambda_x,\rho_x\circ int(w)).$
  \item If $y=wx$, where $w\in W$, 
  then $\theta_y=(\lambda_x\circ int(w^{-1}),\rho_x)$. 
\end{enumerate}
\end{lem}
\begin{proof} 
As before, write $x=fu$, with $f\in E_2$ and $u\in W$.  
If $y=xw$, then $y=fuw$, and so Lemma \ref{thetaforex1.lem}, 
yields $\theta_y=(\lambda_f,\lambda_f^{-1}\circ int(uw))=(\lambda_f,\lambda_f^{-1}\circ int(u)\circ int(w))$. 
Thus $\theta_y=(\lambda_x,\rho_x\circ int(w))$.    
On the other hand, if $y=wx$, then $y=wfu=f'wu$, where $f'=wfw^{-1}$.  
Once again, Lemma \ref{thetaforex1.lem} implies $\theta_y=(\lambda_{f'},\lambda_{f'}^{-1}\circ int(wu))$. 
Moreover, $\lambda_{f'}=\lambda_f\circ int (w^{-1})$.  
Thus $\theta_y=(\lambda_x\circ int(w^{-1}),\rho_x)$. 
\end{proof}

Next we state a slight variation on a result of 
Putcha \cite[Proposition 10.9]{pu:lam}. 

\begin{lem}\label{hclass.lem}
Let $f\in E_2$. Then either there is a unique $s\in C_W(f)$ such that 
$sf=fs\neq f$, or every $s\in C_W(f)$ satisfies $sf=fs=f$. 
In the former case, $s$ is a reflection, denoted $s_{\alpha_f}$. 
Moreover, $\lambda_f=\alpha_f$, a root of $(G,T)$. 
\end{lem}

\begin{proof}
Since $C_W(f)=\langle s_\alpha \;|\; \alpha \in \Delta,\; s_\alpha f=fs_\alpha \rangle$ 
\cite[Lemma 10.15]{pu:lam}, 
the first two assertions follow directly from \cite[Proposition 10.9]{pu:lam}. 
Thus, assuming there is a unique reflection 
$s:=s_{\alpha_f}\in C_W(f)$ with the property that $sf=fs\neq f$, 
it remains to show that $\lambda_f=\alpha_f$. 
In fact, since 
the map $T\to Tf$ is $s$-equivariant and 
$s$ acts trivially on ${\rm Ker}(T\to Tf)$ \cite[Corollary 10.11]{pu:lam}, 
it suffices to check that $\lambda_f$ and $\alpha_f$ agree on $Tf$.  
For this,  
consider the inner transformation $int(s):f\overline{T}\to f\overline{T}$, $fx\mapsto fsxs^{-1}$. 
Let us examine the automorphism $\sigma$ induced by $int(s)$ on 
$f\overline{T}-\{0\}/Z\simeq \P^1$. 
Recall that there are exactly two rank-one idempotents $f_1$ and $f_2$ below $f$.
Denote by $0$ and $\infty$, respectively, their classes in the orbit space 
$f\overline{T}-\{0\}/Z$. Also, since $f$ is the identity element of the 
reductive monoid $f\overline{T}$, let us denote its class on $\P^1$ by $1$.
Because $(sf_is^{-1})\cdot f=sf_is^{-1}$ for $i=1,2$, it is clear that
$\sigma$ permutes the points $0$ and $\infty$. So either 
$\sigma(0)=0$ and $\sigma(\infty)=\infty$ 
or else $\sigma(0)=\infty$ and $\sigma(\infty)=0$.
Moreover, $\sigma(1)=1$ in either case, because 
$\sigma$ restricts to an algebraic automorphism of $\G\simeq Tf/Z=\P^1\setminus\{0,\infty\}$.
Hence, as a M\"{o}bius transformation, 
$\sigma$ is either $z\mapsto z$ or $z\mapsto z^{-1}$.
The former is clearly impossible because, by assumption, $sf=fs\neq f$. 
Therefore, by looking
at the commutative diagram
$$\xymatrix{
Tf     \ar[rr]^{int(s)} \ar[d]^{\pi} &  & Tf \ar[d]_{\pi} \\
Tf/Z   \ar[rr]^{z\mapsto z^{-1}}      &  & Tf/Z  \\}
$$ 
we conclude that $s$, when restricted to $Tf$, is also a reflection. 
Clearly, $\alpha_f$ is uniquely determined by the commutative diagram above, 
thus $\lambda_f=\alpha_f$. 
\end{proof}

%
%

\begin{lem} \label{thetaforex3.lem}
Let $x\in \mathcal{R}_2$, and write $x=fu$, 
with $f\in E_2$ and $u\in W$. 
The following are equivalent. 
\begin{enumerate}
\item There is a unique reflection $s_{\alpha_f}$ such that $s_{\alpha_f}f=fs_{\alpha_f}\neq f$. 
\item The two $T\times T$-fixed points of $\overline{T[x]T}\subset X$  
are in the same $W\times W$-orbit. 
\end{enumerate}
\end{lem}
\begin{proof}
Let $x\in\mathcal{R}_2$ and let $a,b\in\overline{TxT}$ 
represent the two $T\times T$-fixed points 
in $\overline{T[x]T}$. 
Then  
$a=f_1x=f_1u$ and $b=f_2x=f_2u$ where $f_1,f_2$ are the two rank-one
idempotents below $f$. 
Assume (1) holds. 
Set $s:=s_{\alpha_f}$.  
Notice that $sf_1s=f_2$ since $sf=fs\neq f$. 
So by setting $t:=u^{-1}su$, one gets $b=sat$ and $a=sbt$. 

Now let $x=fu\in\mathcal{R}_2$ and assume that $f_1x=f_1u$ and $f_2x=f_2u$ are in the same
$W\times W$-orbit. Then $f_1$ and $f_2$ are in the same $W\times W$-orbit. 
This implies that $f_1$ and $f_2$ are conjugate (Subsection 1.1). 
Furthermore, \cite[Corollary 8.9 and Proposition 10.9]{pu:lam} asserts that $f_1$ and $f_2$ are conjugate
by an element $s\in C_W(f)=\{v\in W\;|\; vf=fv\}$. 
It follows that $sf=fs\neq f$ (for otherwise, $sf_1s^{-1}=f_1$ but $f_1\neq f_2$). 
Thus $s=s_{\alpha_f}$ (Lemma \ref{hclass.lem}).  
\end{proof}

The next result follows immediately from Lemmas \ref{thetaforex1.lem}, \ref{thetaforex2.lem} and \ref{hclass.lem}.  

\begin{lem} \label{thetaforex4.lem}
Let $x\in\mathcal{R}_2$. 
Write $x=fu$ and assume there is a (unique) reflection $s_{\alpha_f}$
such that $s_{\alpha_f}f=fs_{\alpha_f}\neq f$. 
Then $\lambda_f=\alpha_f$ and $\lambda_f\circ {\rm int}(s_{\alpha_f})=\lambda_f^{-1}$. 
%
In particular, the components of $\theta_x=(\lambda_x,\rho_x)$ 
are roots of $(G,T)$, namely,   
$$\theta_x=(\lambda_x,\rho_x)=(\alpha_f, \alpha_f^{-1} \circ int(u)). $$ 
Moreover, if $y=s_{\alpha_f}x$, then 
$\theta_y=(\lambda_x^{-1},\rho_x)=(\alpha_f^{-1},\alpha_f^{-1} \circ int(u))$. \qed
\end{lem}
%
%

\begin{ex} \label{curvesformn.ex}
Let $M=M_n(\k)$ and let $T$ be the set of invertible, diagonal matrices. One checks that
\[
\mathcal{R}_2=\{E_{i,j} + E_{k,l}\;|\; i\neq k\;\text{and}\; j\neq l\}.
\]
where $E_{i,j}$ denotes the elementary matrix with a one in the $(i,j)$-position 
and zeros elsewhere. Let $\underline{s}=(s_1,...,s_n)\in T$
denote the obvious diagonal matrix. A simple calculation yields that, for $\underline{s},
\underline{t}\in T$ and $x=E_{i,j} + E_{k,l}$,
\[
\theta_x(\underline{s},\underline{t})=s_i^{-1}s_kt_jt_l^{-1}.
\]
In this case, $x=fu$, with $f=E_{i,i}+E_{k,k}$ and $u=\sigma_{j,i}\sigma_{k,l}$, where  
$\sigma_{i,j}$ represents the permutation matrix that exchanges 
the $i$-th and $j$-th columns of the identity matrix.  
The unique reflection $s_f$ satisfying $s_ff=fs_f\neq f$ is $\sigma_{i,k}$. 
The corresponding $y=s_f x\in \mathcal{R}_2$ from Lemma \ref{thetaforex4.lem} is 
$y=E_{k,j} + E_{i,l}$, 
Thus,
\[
\theta_y(\underline{s},\underline{t})=s_is_k^{-1}t_jt_l^{-1}.
\]
In the terminology of Lemma \ref{thetaforex4.lem}, $\theta_x = (\lambda_x,\rho_x)$
where $\lambda_x=\alpha_{k,i}=s_ks_i^{-1}$ and $\rho_x=\alpha_{j,l}=t_jt_l^{-1}$. 
Similarly,
$\lambda_y=\alpha_{i,k}$ and $\rho_y=\alpha_{j,l}$. 
\end{ex}


\section{$GKM$ theory of rationally smooth projective group embeddings}
%
In this and next section 
we specialize the results of Section 2 to the case when $\k=\C$ and 
the group embedding $\P_\epsilon(M)$ is rationally smooth. 

From now on, 
in order to consider $S$, the symmetric algebra over $\Q$ of the character group of $T$,  
we shall write characters in the additive notation, that is, $(\chi_1+\chi_2)(t):=\chi_1(t)\chi_2(t)$.

\subsection{GKM theory} 

We recall some basic facts and nomenclature from \cite{gkm:eqc}. 
A $T$-variety $X$ 
is called 
{\em equivariantly formal} if 
the natural map $H^*_T(X)\to H^*(X)$ is surjective (i.e.  
$H^*(X)$ is the quotient of $H^*_T(X)$ 
by its homogeneous ideal generated by all characters of $T$).  
If $X^T$ is finite, then $X$ is equivariantly formal if and only if 
$X$ has no cohomology in odd degrees \cite[Lemma 1.2]{bri:eu}.  

A $T$-variety is a {\em GKM variety} if it is complete, $T$-skeletal, and equivariantly formal. 

\begin{thm}[\protect{\cite[Theorem 1.2.2]{gkm:eqc}}] \label{gkm.thm}
Let $X$ be a GKM variety, and let $X^T=\{x_1,\ldots,x_m\}$. 
Then the pullback  
$$i_T^*:H^*_T(X)\to H^*_T(X^T)$$ 
is injective, and its image is the set of all 
$(f_1,\ldots,f_m)\in S^m$ such that $f_i\equiv f_j \mod \chi$ whenever $x_i,x_j$ are 
connected by an irreducible invariant curve where $T$ 
acts through the character $\chi$. \qed 
\end{thm}
For notational purposes, we shall interpret the image of $i^*_T$ as the set of all 
maps $\varphi:X^T\to S$ such that $\varphi(x_i)\equiv \varphi(x_j) \mod \chi$ whenever 
$x_i$ and $x_j$ are connected by an irreducible invariant curve where $T$ 
acts through the character $\chi$. 


\smallskip 

By a result of Borel \cite{bo:sem}, if $X$ is a $G$-variety, 
then $H^*_G(X)\simeq H^*_T(X)^W$. 

\begin{lem}\label{comparison.formality}
Let $X$ be a $G$-variety. Suppose that $X$ has no cohomology in odd degrees and that, 
for the induced $T$-action, $X^T$ is finite. Then 
$H^*_G(X)$ and $H^*_T(X)$ are free modules over $S^W$ and $S$ 
respectively, and their ranks satisfy 
$${\rm rank}_{S^W}H^*_G(X)={\rm dim}_\Q{H^*(X)}={\rm rank}_{S}H^*_T(X)=|X^T|.$$ 
\end{lem}

\begin{proof}
By hypothesis, $X$ is equivariantly formal for the induced $T$-action, 
so $H^*_T(X)$ is a free $S$-module \cite[Lemma 1.2]{bri:eu}. 
Now let $F$ be a graded $W$-stable complement of $S_+\cdot H^*_T(X)\subset H^*_T(X)$, 
where $S_+$ is the ideal in $S$ consisting of polynomials without constant term. 
One checks that 
$H^*_T(X)\simeq S\otimes F$. 
It is known that  
$S\simeq S^W\otimes R$, where $R$ is isomorphic to the regular representation of $W$. 
So $H^*_T(X)\simeq S^W\otimes R\otimes F$, which implies $H^*_G(X)\simeq S^W\otimes (R\otimes F)^W$. 
But then $(R\otimes F)^W\simeq F$, because $R$ is the regular representation of $W$.   
Thus,  
${\rm rank}_{S^W}H^*_G(X)={\rm rank}_{S}H^*_T(X)={\rm dim}_\Q{H^*(X)}=\dim F$. 
Finally, by the Localization Theorem (see e.g. \cite[Lemma 1.1]{bri:eu}) 
we conclude that  
${\rm rank}_{S}H^*_T(X)=|X^T|.$ 
\end{proof}

\subsection{Rationally smooth group embeddings} 
Recall that for $e\in E(M)$, the monoid    
$M_e:=\overline{\{g\in G\,|\,ge=eg=e\}}$  
is a  
reductive monoid with 
$e$ as its zero element \cite[Corollary 2.3.3]{bri:mon}. 
Next is Renner's characterization of rationally smooth group embeddings. 

\begin{thm}[\protect{\cite{re:ratsm}}] \label{ratsm.thm}
Let $\mathbb{P}_\epsilon(M)$ be a projective group embedding. Then the following are equivalent.
\begin{enumerate}
\item $\mathbb{P}_\epsilon(M)$ is rationally smooth.
\item $M\setminus\{0\}$ is rationally smooth.
\item For any minimal, nonzero, idempotent $e$ of $M$, $M_e$ is rationally smooth.
\item For any maximal torus $T$ of $G$, $\overline{T}\setminus\{0\}$ is rationally smooth. \hfill $\square$
\end{enumerate}
\end{thm} 
In particular, $\P_\epsilon(M)$ is rationally smooth 
if and only if $\P_\epsilon(\overline{T})$ 
is rationally smooth 
(a toric variety is 
rationally smooth if and only if it is simplicial \cite{da:tor}).   
Also, notice that the condition does not depend on the choice of $Z$.


\begin{ex} 
Smooth group embeddings are clearly rationally smooth. 
In particular, so are the projective 
{\em regular} embeddings: 
smooth projective $G$-embeddings 
whose closed $G\times G$-orbits are all of the form $G/B\times G/B^-$ (cf. Proposition \ref{quasireg.cor}).  
\end{ex}

\begin{ex}\label{ssrank2.ex}
Let $M$ be a semisimple monoid with zero and unit group $G$ 
of the form $\C^*\times G_0$, where $G_0$ is a simple 
algebraic group of type $A_2$, $C_2$ or $G_2$. 
Then $\P(M)$ is {\em always} rationally smooth.  
Indeed, this follows from Theorem \ref{ratsm.thm}, since,  
in this context, $\P(\overline{T})$ is a simplicial toric surface. 
Note that there are cases when $\P(M)$ has closed $G\times G$-orbits 
of the form $G/P\times G/P^-$, where $P\supsetneq B$ 
is a parabolic subgroup.  
Such embeddings, though rationally smooth, are not regular. See \cite[Section 5.1]{re:lam} for details.  
\end{ex}

\begin{ex}\label{simple.emb.ex}
Let $G$ be a semisimple adjoint group with 
Borel subgroup $B$ and maximal torus $T\subset B$. 
An embedding of $G$ is called 
{\em simple} if it contains a unique closed $G\times G$-orbit. 
Let $X$ be such an embedding. Then, using the notation from Example \ref{construction.ex}, 
$X$ is of the form $\P(M_{\rho_\lambda})$, for some irreducible 
representation $\rho_\lambda$ of $G$, 
with highest weight $\lambda$ \cite{re:ratsm}.    
Moreover, the unique closed $G\times G$-orbit of $X$ 
is the partial flag variety 
$G/P_J\times G/{P_J^-}$, 
where $J=\{\alpha\in \Delta \;|\; s_\alpha(\lambda)=\lambda\}.$ 
Recall that $P_J$ is the standard parabolic subgroup associated to $J$, and 
$P_J^-$ its opposite. 
Using Theorem \ref{ratsm.thm}, Renner has classified  
all rationally smooth simple embeddings combinatorially, 
in terms of $J$ and the Dynkin diagram for $G$ \cite{re:hpolyirr}. 
In Section 4 we present Renner's list of all possible 
$J$'s that yield rationally smooth group embeddings, and discuss the 
connections to Timashev's description of projective group embeddings  
via weight polytopes \cite[Section 27]{ti:sph}.  
\end{ex}


Next is a   
justification for the use of GKM theory 
in the study of rationally smooth group embeddings.  
It is a consequence of Theorem \ref{ratsmisgeneric.thm} 
and \cite[Theorem 7.4]{go:cells}.  

\begin{thm}\label{ratsmimpliesgkm.thm}
Let 
$X=\mathbb{P}_{\epsilon}(M)$
be a projective group embedding. 
If $X$ is rationally smooth, then, for its natural $T\times T$-action,  
$X$ is a GKM variety. \qed 
\end{thm}


\subsection{The main results.} 

Let $X=\P_\epsilon(M)$ be a projective group embedding. 
Recall that the closed $G\times G$-orbits of $X$ correspond to idempotents $e\in \Lambda_1$.
Because the $T\times T$-fixed points of $G[e]G\simeq G/P_e\times G/P_e^-$ are identifiable with the two-sided orbit $WeW$, 
equivariant cohomology 
classes in $H^*_{T\times T}(G[e]G)$ correspond, via Theorem \ref{gkm.thm}, 
to functions $\varphi_e:WeW\to S\otimes S$ 
satisfying the conditions: 
\begin{enumerate}[(i)]
\item $\varphi_e(ew)\equiv \varphi_e(s_\alpha ew) \mod (\alpha,0)$  
whenever $s_\alpha \notin C_W(e)$ and $w\in W$,  
\smallskip 

\item $\varphi_e(we)\equiv \varphi_e(wes_\alpha) \mod (0,\alpha)$ 
whenever $s_\alpha \notin C_W(e)$ and $w\in W$.  
\end{enumerate} 

Now we state the first major result of this article. 
For the analogous result
in the case of projective regular embeddings, 
see \cite[Theorem 3.1.1]{bri:bru}.

\begin{thm} \label{main.thm}
Let $X=\mathbb{P}_\epsilon(M)$ be a projective group embedding. If $X$ is rationally smooth, then the natural map 
      $$H^*_{T\times T}(X) \longrightarrow H^*_{T\times T}\left( \bigsqcup_{e\in \Lambda_1} G[e]G \right)=\bigoplus_{e\in \Lambda_1}H^*_{T\times T}(G[e]G)$$ 
      is injective. In fact,  
       its image consists of all tuples $(\varphi_e)_{e\in \Lambda_1}$, indexed over $\Lambda_1$ and with 
       $\varphi_e \in H^*_{T\times T}(G[e]G)$, 
       subject to the additional conditions:
   \smallskip
   \begin{enumerate} 
          \item If $f\in E_2$ and there is a (necessarily unique) reflection $s_{\alpha_f}$ satisfying  
          $s_{\alpha_f} f=fs_{\alpha_f}\neq f$, then
                $$\varphi_{e_f}(f_1 u)\equiv \varphi_{e_f}(f_2 u) \, \, {\rm mod} \, (\alpha_f, -\alpha_f \circ {\rm int}(u)),$$
                for all $u\in W$. 
                Here,   
                $f_1$ and $f_2=s_{\alpha_f}\cdot f_1 \cdot s_{\alpha_f}$ are the two idempotents in $E_1$ below $f$, 
                the root $\alpha_f$ corresponds to the reflection $s_{\alpha_f}$,
                and 
                $e_f \in \Lambda_1$ is the unique element of $\Lambda_1$ which is conjugate to $f_1$.  
                
\smallskip
           \item If $f\in E_2$ and $sf=fs=f$ for every $s\in C_W(f)$, then 
                 $$\varphi_{e_1}(f_1 u)\equiv \varphi_{e_2}(f_2 u) \, \, {\rm mod} \, (\lambda_f,-\lambda_f \circ int(u)),$$
                 for all $u\in W$. Here,  
                 $\lambda_f$ is the character of $T$ defined by the composition 
                 $$T\to Tf\to Tf/\G\simeq \G, $$  
                 the idempotents $f_1, f_2$ are the unique idempotents below $f$,
                 and $e_i\in \Lambda_1$ is conjugate to $f_i$, for $i=1,2$. 
    \end{enumerate}        
\end{thm}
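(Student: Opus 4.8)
The plan is to deduce everything from GKM theory applied to $X$ together with the curve and character computations already assembled. By Corollary \ref{ratsmimpliesgkm.thm}, $X=\mathbb{P}_\epsilon(M)$ is a GKM variety for the $T\times T$-action, so Theorem \ref{gkm.thm} identifies $H^*_{T\times T}(X)$ with the ring $PP^*_{T\times T}(X)$ of piecewise polynomial functions. Since $X^{T\times T}=\mathscr{R}_1$ (Theorem \ref{thefixedpoints.thm}), this is the subalgebra of $\bigoplus_{r\in\mathscr{R}_1}H^*_{T\times T}$ consisting of tuples $(f_r)$ with $f_r\equiv f_{r'}$ modulo the character $\chi_C$ for every closed $T\times T$-curve $C$ joining $r$ and $r'$. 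By Theorem \ref{thecurves2.thm} these curves are exactly of the three listed types, so the defining congruences split into three families, and the whole proof amounts to reading off these three families.

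First I would treat the type-1 and type-2 curves. By Lemma \ref{chiforcompletecurves.lem} and its left-right mirror, their characters are $(\alpha,1)$ and $(1,\alpha)$ with $s_\alpha\notin C_W(e)$, which are precisely the relations appearing in Proposition \ref{gkmclosedorbits.prop}. Because $\mathscr{R}_1=\bigsqcup_{e\in\Lambda_1}WeW$ and these curves lie entirely inside the closed orbits $G[e]G$ (with $(G[e]G)^{T\times T}=WeW$), the subring of $\bigoplus_{r}H^*_{T\times T}$ cut out by the first two families is exactly $\bigoplus_{e\in\Lambda_1}H^*_{T\times T}(G[e]G)$. Consequently the restriction-to-closed-orbits map $H^*_{T\times T}(X)\to\bigoplus_{e}H^*_{T\times T}(G[e]G)$ is, under these identifications, nothing but the inclusion $PP^*_{T\times T}(X)\hookrightarrow\bigoplus_{e}H^*_{T\times T}(G[e]G)$ inside $\bigoplus_r H^*_{T\times T}$; in particular it is injective, which is the first assertion.

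It then remains to identify the image as those tuples additionally satisfying the type-3 congruences. Each type-3 curve is $\overline{TxT}$ with $x=fu$, $f\in E_2(\overline{T})$, $u\in W$, joining the two fixed points $f_1u$ and $f_2u$, where $f_1,f_2\in E_1(\overline{T})$ are the idempotents below $f$ (proof of Lemma \ref{thetaforex3.lem}); letting $u$ range over $W$ enumerates all such curves with a given $f$. The dichotomy of Lemma \ref{hclass.lem} splits these into the two cases of the statement. If $H_f=\{f,s_{\alpha_f}f\}$, then by Lemma \ref{thetaforex3.lem} the two endpoints lie in a common closed orbit $G[e_f]G$, and by Lemma \ref{thetaforex4.lem} the character is $(\alpha_f,\alpha_f\circ\mathrm{int}(u))$, giving condition (a); if $H_f=\{f\}$, the endpoints lie in distinct orbits $G[e_1]G$, $G[e_2]G$ with $e_i$ conjugate to $f_i$, and by Lemma \ref{thetaforex1.lem} the character is $(\lambda_f,\lambda_f\circ\mathrm{int}(u))$, giving condition (b). Since Theorem \ref{thecurves2.thm} guarantees that types 1--3 exhaust all $T\times T$-curves, $PP^*_{T\times T}(X)$ is the intersection of all three congruence families, so a tuple $(\varphi_e)$ lies in the image precisely when it satisfies (a) and (b).

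The hard part will be bookkeeping rather than any deep difficulty: one must check that the decomposition $x=fu$ enumerates the type-3 curves without losing or silently duplicating a congruence, and that the index $e_f$ (respectively $e_1,e_2$) attached to each relation is the correct element of $\Lambda_1$ — that is, that ``conjugate to $f_i$'' singles out a \emph{unique} cross-section idempotent, which rests on the bijection between $\Lambda$ and the $W$-conjugacy classes in $E(\overline{T})$. The $W\times W$-action on $\mathscr{C}(X,T\times T)$ and its effect on the characters (Lemma \ref{thetaforex2.lem}) is the tool I would use to reduce these verifications to orbit representatives and to confirm that the congruences are consistent across all choices of $u$.
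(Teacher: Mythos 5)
Your proposal is correct and follows essentially the same route as the paper's proof: both reduce the statement to the GKM description of $H^*_{T\times T}(X)$ via Corollary \ref{ratsmimpliesgkm.thm} and Theorem \ref{gkm.thm}, observe that the type-1 and type-2 congruences cut out exactly $\bigoplus_{e\in\Lambda_1}H^*_{T\times T}(G[e]G)$ by Proposition \ref{gkmclosedorbits.prop} (which also gives injectivity, since all fixed points lie in the closed orbits), and then convert the type-3 curves into conditions (a) and (b) using the dichotomy of Lemma \ref{hclass.lem} together with Lemmas \ref{thetaforex3.lem}, \ref{thetaforex4.lem}, and \ref{thetaforex1.lem}. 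The only cosmetic difference is that the paper phrases injectivity through the factorization $i^*=k^*\circ j^*$ in a commutative triangle, whereas you phrase it as an inclusion of subrings of $\bigoplus_{r\in\mathscr{R}_1}H^*_{T\times T}$; these are the same argument.
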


\begin{proof}
Since $X$ is a GKM variety (Theorem \ref{ratsmimpliesgkm.thm}) and 
$X^{T\times T}\subset \bigsqcup_{e\in \Lambda_1}G[e]G$, one easily checks that 
the natural map 
$H^*_{T\times T}(X) \longrightarrow H^*_{T\times T}\left( \bigsqcup_{e\in \Lambda_1} G[e]G \right)$
is injective. 
%
Now we apply Theorem \ref{gkm.thm}  
to describe the image.  
First, observe that the curves of type (1) and (2) in Theorem \ref{thecurves1.thm} 
are contained in $\bigsqcup_{e\in \Lambda_1}G[e]G$, and 
these curves describe 
$H^*_{T\times T}\left( \bigsqcup_{e\in \Lambda_1} G[e]G \right)$ (see e.g. \cite[Proposition 6.5]{bri:eqchow}).  
%
%
Thus, to conclude the proof, we just need to show that the curves of type (3) in Theorem \ref{thecurves1.thm} 
yield assertions (a) and (b). 
So let $\overline{[TxT]}$, with $x=fu\in \mathcal{R}_2$, be one of these curves. 
By Lemma \ref{hclass.lem}, either there exists a unique reflection $s_{\alpha_f}$ such that 
$s_{\alpha_f}f=fs_{\alpha_f}\neq f$, or $sf=fs=f$ for all  $s\in C_W(f)$.  
In the first case, 
Lemma \ref{thetaforex3.lem} implies that the two fixed points of $\overline{[TxT]}$, namely $f_1x$ and $f_2x$, 
lie in the same closed $G\times G$-orbit (here recall that $f_1,f_2$ are the two idempotents below $f$).  
Moreover, $f_2$ is conjugate to $f_1$ via $s_{\alpha_f}$, namely, 
$f_2=s_{\alpha_f} \cdot f_1 \cdot s_{\alpha_f}$. 
We now use Lemma \ref{thetaforex4.lem} to write the associated character
$\theta_x$ as $\theta_x=(\alpha_f,-\alpha_f \circ int(u))$ (in additive notation),  
where $\alpha_f$ is the root associated to the reflection $s_{\alpha_f}$. 
Since $\Lambda_1$
indexes all closed $G\times G$-orbits in $X$, 
there exists a unique $e_f\in \Lambda_1$ such that $f_1$ and $e_f$ are conjugate.  
Assertion (a) is now proved.
Finally, if $sf=fs=f$ for all $s\in C_W(f)$, 
then $f_1$ and $f_2$ are not conjugate (Lemma \ref{thetaforex3.lem}).  
That is, $f_1x$ and $f_2x$ lie in different closed $G\times G$-orbits. 
Since $x=fu$, Lemma \ref{thetaforex1.lem}  
finishes the proof.   
\end{proof}

The previous result provides a complete combinatorial description
of the equivariant cohomology of any rationally smooth projective group embedding. 
Furthermore, since $X$ is a GKM-variety, 
the non-equivariant cohomology 
$H^*(X)$ can be recovered from Theorem \ref{main.thm}
via $H^*(X)\simeq H^*_{T\times T}(X)\otimes_{S\otimes S}\Q$. 
As pointed out, Brion \cite[Theorem 3.1.1]{bri:bru} 
has obtained a result analogous to Theorem \ref{main.thm} 
for projective regular embeddings. 
Since the latter is a subclass of  
the class of rationally smooth projective group embeddings,   
our Theorem \ref{main.thm} 
implies \cite[Theorem 3.1.1]{bri:bru}.  
%


\medskip

The $G\times G$-equivariant cohomology of $X$ is obtained by means of 
the formula $H_{G\times G}^*(X)\simeq (H^*_{T\times T}(X))^{W\times W}.$

\begin{cor}\label{gequiv.thm}
If $X=\P_\epsilon(M)$ is a rationally smooth group embedding, then  
the ring $H_{G\times G}^*(X)$ consists of 
all tuples $(\psi_e)_{e\in \Lambda_1}$, where $\psi_e\in (S \otimes S)^{C_W(e)\times C_W(e)}$,  
satisfying the following conditions: 
\begin{enumerate}[(a)]
 \item If $f \in \Lambda_2$ and there is a (unique) reflection $s_{\alpha_f}$ such that  
          $s_{\alpha_f} f=fs_{\alpha_f}\neq f$, then  
       $$(s_{\alpha_f},s_{\alpha_f})\,\psi_e \equiv \psi_e \, \,{\rm mod}\, (\alpha_f,-\alpha_f),$$ 
       where $e\leq f$ and the root $\alpha_f$ corresponds to the reflection $s_{\alpha_f}$. 
       
 \item If $f\in \Lambda_2$ and $sf=fs=f$ for every reflection $s\in C_W(f)$, then 
       $$\psi_{e}\equiv \psi_{e'} \, \, {\rm mod}\, (\lambda_f,-\lambda_f),$$
       where $e,e'\leq f$, and $\lambda_f$ is the character of $T$ defined by $f$. 
 \end{enumerate}
\end{cor}

\begin{proof}
Note that $W\times W$ acts on a tuple $(f_r)$ in $H^*_{T\times T}(\mathcal{R}_1)=\displaystyle \bigoplus_{r\in \mathcal{R}_1}S\otimes S$ via 
$(u,v)\cdot (f_r):=((u,v)\cdot f_{u^{-1}\,r\,v}).$ 
For a subgroup $H$ of $G$, we denote by $EH\to BH$ the  
universal principal $H$-bundle. 
Now 
let $e\in \Lambda_1$. 
From \cite[p. 25]{bri:ech} it follows that $H^*(B P_e)\simeq H^*(B C_G(e))\simeq H^*(BT)^{C_W(e)}$, 
because $C_G(e)$ is the Levi subgroup of $P_e$. 
Consequently, 
$$
\begin{array}{ccc}
H^*_{G\times G}(G[e]G) & \simeq & H^*(B P_e)\otimes H^*(B P_e^-)\\
                       & \simeq & H^*(B C_G(e))\otimes H^*(B C_G(e))\\
                       & \simeq & {H^*(BT)}^{C_W(e)}\otimes H^*(BT)^{C_W(e)}\\
                       & \simeq     & (S \otimes S)^{C_W(e)\times C_W(e)}. 
\end{array}
$$ 
Moreover, one checks that this isomorphism is induced by restriction 
to the fixed point $[e]$ of $G[e]G$ (cf. \cite[Section 6.6]{bri:eqchow}).  
By $W\times W$-invariance, the restriction of $\varphi_e \in H^*_{G\times G}(G[e]G)$ 
to the fixed point $(u,v)\cdot e=u^{-1}ev$ is equal to $(u,v)\cdot \psi_e$,  
where $\psi_e=\varphi_e(e)$. 
Thus the relations (1)
and (2) of Theorem \ref{main.thm} reduce to the proposed descriptions (a) and (b).  
\end{proof}

\medskip

Let $Y=\P_\epsilon(\overline{T})\subset X$ 
be the associated toric variety of $X$ (Subsection 1.2).    
Our next theorem allows to compare the equivariant cohomologies of 
$X$ 
and $Y$. 
The situation for rationally smooth group embeddings contrasts deeply with 
the corresponding one for regular embeddings, cf.  
\cite[Corollary 3.1.2]{bri:bru} and \cite[Corollary 2.2.3]{uma:kth}. 
It is worth noting 
that 
the idea of comparing 
$Y$ and $X$ 
goes back to \cite{lp:equiv}.

\begin{thm}\label{comparison.thm}
If $X=\P_\epsilon(M)$ is rationally smooth, then  
the inclusion of the associated torus embedding $\iota:Y\hookrightarrow X$ 
induces an injection:
$$
\xymatrix{\iota^*:H_{G\times G}^*(X) \ar@{^(->}[r] & H^*_{T\times T}(Y)^W\simeq (H^*_{T}(Y)\otimes S)^W,}
$$
where the $W$-action on $H^*_{T\times T}(Y)$ is induced from the action of {\rm diag}$(W)$ on $Y$. 
Furthermore, $\iota^*$ is an isomorphism if and only if $C_W(e)=\{1\}$ for every $e\in \Lambda_1$. 
\end{thm}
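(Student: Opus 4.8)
The plan is to reduce the statement to the combinatorial (GKM) descriptions already available and then to compare invariance conditions one fixed point at a time. First I would observe that $\mathcal{Y}=\P_\epsilon(\overline{T})$ is itself a GKM variety: since $X$ is rationally smooth, Theorem \ref{ratsm.thm} shows $\overline{T}\setminus\{0\}$ is rationally smooth, so $\mathcal{Y}$ is a rationally smooth projective toric variety, hence $T\times T$-skeletal and equivariantly formal. Its fixed-point set is $E_1(\overline{T})\subseteq\mathcal{R}_1=X^{T\times T}$, and its invariant curves are precisely the $[f\overline{T}]$ with $f\in E_2(\overline{T})$, that is, the type-$3$ curves of $X$ with $u=1$, carrying the character $(\lambda_f,\lambda_f)$ of Lemma \ref{thetaforex1.lem}. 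By functoriality of GKM restriction (Theorem \ref{gkm.thm}) the map $\iota^*$ is identified with restriction of functions from $\mathcal{R}_1$ to $E_1(\overline{T})$; since the defining congruences of $H^*_{T\times T}(\mathcal{Y})$ form a sub-collection of those of $H^*_{T\times T}(X)$, this restriction is well defined, and as $\iota$ is $\mathrm{diag}(N_G(T))$-equivariant it carries $H^*_{G\times G}(X)=H^*_{T\times T}(X)^{W\times W}$ into $H^*_{T\times T}(\mathcal{Y})^{\mathrm{diag}(W)}=H^*_{T\times T}(\mathcal{Y})^W$. Injectivity is then immediate, because a $W\times W$-equivariant function on $\mathcal{R}_1=\bigsqcup_{e\in\Lambda_1}WeW$ is determined by its values on the representatives $e\in\Lambda_1\subseteq E_1(\overline{T})$, so vanishing on $E_1(\overline{T})$ forces vanishing everywhere.

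The heart of the theorem is the isomorphism criterion, which I would obtain by comparing local invariance at each fixed point. The structural input is that $G[e]G\simeq G/P_e\times G/P_e^-$ (Proposition \ref{orbits.thm}), whence the $W\times W$-stabilizer of $e\in\Lambda_1$ equals $C_W(e)\times C_W(e)$. Thus for $\varphi\in H^*_{G\times G}(X)$ the value $\varphi(e)$ must lie in $(H^*_T\otimes H^*_T)^{C_W(e)\times C_W(e)}$ (this is the content of Corollary \ref{gequiv.thm}), while an arbitrary $\phi\in H^*_{T\times T}(\mathcal{Y})^W$ only satisfies $\phi(e)\in(H^*_T\otimes H^*_T)^{\mathrm{diag}(C_W(e))}$. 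These two conditions coincide exactly when $C_W(e)=\{1\}$, which is the source of the criterion.

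For the direction assuming $C_W(e)=\{1\}$ for all $e$, I would prove surjectivity by explicit extension. Given $\phi\in H^*_{T\times T}(\mathcal{Y})^W$, set $\varphi(uev^{-1}):=(u,v)\cdot\phi(e)$ for $e\in\Lambda_1$; this is well defined since the stabilizers $C_W(e)\times C_W(e)$ are trivial, and it is $W\times W$-equivariant with $\varphi|_{E_1(\overline{T})}=\phi$. One then checks that $\varphi$ satisfies all GKM congruences of $X$ type by type. The type-$1$ and type-$2$ relations hold because $G[e]G=G/B\times G/B^-$ and $H^*_{G\times G}(G/B\times G/B^-)=H^*_T\otimes H^*_T$ restricts isomorphically to the base-point value, so every $\phi(e)$ is realized by a genuine class on the closed orbit (Proposition \ref{gkmclosedorbits.prop}). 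The type-$3$ relations hold because every such curve is a right translate $(1,u^{-1})\cdot[f\overline{T}]$ of a curve of $\mathcal{Y}$; $W\times W$-equivariance, together with the transformation of characters in Lemmas \ref{thetaforex1.lem} and \ref{thetaforex4.lem}, turns the $\mathcal{Y}$-congruence satisfied by $\phi$ into the congruence of Theorem \ref{main.thm} at $x=fu$. Hence $\varphi\in H^*_{G\times G}(X)$ and $\iota^*\varphi=\phi$.

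For the converse I would argue contrapositively. If $C_W(e_0)\neq\{1\}$, then the image of $\iota^*$ lies in the proper subset of those $\phi$ with $\phi(e_0)\in(H^*_T\otimes H^*_T)^{C_W(e_0)\times C_W(e_0)}$. To leave this subset it suffices to use a constant, point-pulled-back class: for a generic $p\in H^2_T$ put $\psi=\sum_{w\in W}wp\otimes wp\in(H^*_T\otimes H^*_T)^{\mathrm{diag}(W)}$; the constant function $\phi\equiv\psi$ lies in $H^*_{T\times T}(\mathcal{Y})^W$, yet a reflection $s_\beta\in C_W(e_0)$ moves $\psi$ in the first tensor factor, so $\psi$ is not bi-invariant and $\phi\notin\mathrm{im}(\iota^*)$. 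I expect the main obstacle to be the surjectivity verification, in particular confirming that in the case $H_f=\{f,s_{\alpha_f}f\}$ the congruence mod $(\alpha_f,\alpha_f\circ\mathrm{int}(u))$ is automatic (because $a-s_{\alpha_f}a\in(\alpha_f)$ for every $a\in H^*_T$), and handling with care the identification $H^*_{T\times T}(\mathcal{Y})\simeq H^*_T(\mathcal{Y})\otimes H^*_T$ and the $\mathrm{diag}(W)$-action across it.
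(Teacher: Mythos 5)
Your proposal is correct, but both halves of the equivalence are argued by a genuinely different route than the paper's. For the direction ``isomorphism $\Rightarrow$ trivial centralizers,'' the paper does not exhibit an obstruction class as you do; it runs a rank count in the style of Littelmann--Procesi: using Lemma \ref{leray.cor} (the regular-representation splitting of $H^*_{T\times T}$), Lemma \ref{comparison.formality}, the dimension formula of Lemma 4.13 and the graded Nakayama lemma, it shows surjectivity of $\iota^*$ forces $|\mathcal{R}_1|=|E_1||W|$, and then the combinatorial Lemma 4.14 converts this equality into $C_W(e)=\{1\}$ for all $e\in\Lambda_1$. Your constant class $\phi\equiv\sum_{w\in W}wp\otimes wp$, which is $\mathrm{diag}(W)$-invariant but not $(s_\beta,1)$-invariant (indeed $\psi-(s_\beta,1)\psi=\beta\otimes\sum_w\beta^\vee(wp)\,wp\neq 0$ once $\beta^\vee(p)\neq 0$), is more elementary and explicitly identifies where the image fails to be all of $H^*_{T\times T}(\mathcal{Y})^W$; it quietly uses that the $W\times W$-stabilizer of a rank-one idempotent is $C_W(e_0)\times C_W(e_0)$ and that $C_W(e_0)$, being the Weyl group of the Levi $C_G(e_0)$, contains a reflection when nontrivial --- both facts are available in the paper. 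For the converse, the paper avoids your type-by-type congruence check by a geometric device: it forms $\mathcal{N}=\bigsqcup_{w\in W}w\mathcal{Y}$ (disjointness by fixed-point counting under the hypothesis), notes that $\mathcal{N}$ contains all fixed points and all type-3 curves while the type-1 and type-2 curves lie in closed orbits, and concludes $H^*_{T\times T}(X)^{W\times W}\simeq H^*_{T\times T}(\mathcal{N})^{W\times W}\simeq H^*_{T\times T}(\mathcal{Y})^W$ by GKM theory and Corollary \ref{gequiv.thm}. Your explicit equivariant extension $\varphi(uev^{-1})=(u,v)\cdot\phi(e)$ achieves the same with more computation but less geometry, and the checks do go through: translation by $(1,u^{-1})$ carries the $\mathcal{Y}$-congruence mod $(\lambda_f,\lambda_f)$ to the required one mod $(\lambda_f,\lambda_f\circ\mathrm{int}(u))$, and the type-1/2 relations are automatic because evaluation at the base point identifies $H^*_{G\times G}(G/B\times G/B^-)$ with $H^*_T\otimes H^*_T$. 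One point you defer but should supply, since it is part of the statement: the identification $H^*_{T\times T}(\mathcal{Y})^W\simeq(H^*_T(\mathcal{Y})\otimes H^*_T)^W$, which the paper proves via the splitting $T\times T\simeq\mathrm{diag}(T)\times(T\times 1)$ together with the triviality of the $\mathrm{diag}(T)$-action on $\mathcal{Y}$. On balance, your argument leans entirely on the combinatorial description of Theorem \ref{main.thm} and yields explicit preimages and obstructions, while the paper's argument yields the quantitative by-product $|\mathcal{R}_1|=|E_1||W|$ and the geometrically meaningful subvariety $\mathcal{N}$ on which all the relevant GKM data of $X$ is visible.
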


\begin{proof}
Since $X$ is rationally smooth, then $Y$ is rationally smooth as well (Theorem \ref{ratsm.thm}). 
Therefore, we have the following commutative diagram
$$
\xymatrix{
H^*_{T\times T}(X)  \ar@{^(->}[r]           \ar[d]^{\iota^*} &  H^*_{T\times T}(X^{T\times T}) \ar[d]^{\iota^*} \\
H^*_{T\times T}(Y) \ar@{^(->}[r]              & H^*_{T\times T}({Y}^{T\times T})_,                 \\
}
$$ 
where the horizontal maps are injective, because both group embeddings are GKM. 
On the other hand, 
recall that $\Lambda_1$ provides a set of representatives of 
both the $W\times W$-orbits in $X^{T\times T}=\mathcal{R}_1$
and the $W$-orbits 
in $Y^{T\times T}=E_1$. 
Thus, after taking 
invariants, we obtain an injection
$$H^*_{T\times T}(\mathcal{R}_1)^{W\times W}=\bigoplus_{e\in \Lambda_1}(S\otimes S)^{C_W(e)\times C_W(e)} 
\hookrightarrow H^*_{T\times T}(E_1)^W=\bigoplus_{e\in \Lambda_1}(S\otimes S)^{C_W(e)}.$$
Placing this information into the commutative diagram above
shows that the restriction map 
$$\iota^*: H^*_{T\times T}(X)^{W\times W} \longrightarrow H^*_{T\times T}(Y)^W$$ 
is injective.
Now observe that $H^*_{T\times T}(Y)^W\simeq (H^*_{T}(Y)\otimes S)^W$.
Indeed, we have a split exact sequence
$$
\xymatrix{
1 \ar[r]& {\rm diag}(T) \ar[r]& T\times T \ar[rrr]^{(t_1,t_2)\mapsto t_1t_2^{-1}}& & & T \ar@/^1pc/ @{-->}[lll] \ar[r]& 1,
}
$$
where the splitting is given by $t\mapsto (t, 1)$.
It follows that $T\times T$ is canonically isomorphic to ${\rm diag}(T)\times(T\times {1})$. 
Furthermore, 
by definition,
${\rm diag}(T)$ acts trivially on $Y$. 
As a consequence, we have a
ring isomorphism $H^*_{T\times T}(Y)\simeq S\otimes H^*_T(Y)$. 
This isomorphism is further $W$-invariant since the $W$-action on the cohomology rings is induced from
the action of $\diag(W)$ on $Y$.

\smallskip

To prove the second part of the Theorem, we adapt to our situation
an argument of Littelmann and Procesi \cite[Theorem 2.3]{lp:equiv}.

First, assuming that $\iota^*$ is also surjective, 
we need to show that $C_W(e)=\{1\}$ for all $e\in \Lambda_1$.
Since $X$ is equivariantly formal, then $H^*_{G\times G}(X)$ is 
a free $(S\otimes S)^{W\times W}$-module of rank $|\mathcal{R}_1|$ (Lemma \ref{comparison.formality}).
And $H^*_{T\times T}(Y)$ is a free $S\otimes S$-module of rank $|E_1|$, for the same reason.
Recall that we can choose a graded $W\times W$-submodule $R$ of
$S\otimes S$, isomorphic to the regular representation of $W\times W$, such that 
$$
S\otimes S\simeq R\otimes (S\otimes S)^{W\times W}
$$
as graded $(S\otimes S)^{W\times W}$-module. 
Accordingly, $H^*_{T\times T}(Y)^{W}$
is in a natural way a free $(S\otimes S)^{W\times W}$-module.
%
Since, by assumption, $\iota^*$ is a graded isomorphism of free $(S\otimes S)^{W\times W}$-modules, we
conclude that the ranks of $H^*_{G\times G}(X)$ and $H^*_{T\times T}(Y)^W$ must be the same.
The next step consists in finding out a more intrisic formula
for the rank of the latter module,  
so as to compare it with $|\mathcal{R}_1|$. 

Since $H^*_{T\times T}(Y)$ is a free $S\otimes S$-module, 
we can find a graded $W$-stable submodule $U$ 
of $H^*_{T\times T}(Y)$
such that the morphism $$U\otimes (S\otimes S)\longrightarrow H^*_{T\times T}(Y)$$
is a $W$-equivariant isomorphism of graded $S\otimes S$-modules (cf. proof of Lemma \ref{comparison.formality}). Here $\dim U=rank_{S\otimes S}H^*_{T\times T}(Y)=|E_1|$.  
Hence 
$$H^*_{T\times T}(Y)^W\simeq (U\otimes S\otimes S)^W\simeq (U\otimes R\otimes (S\otimes S)^{W\times W})^W\simeq (U\otimes R)^W\otimes (S\otimes S)^{W\times W}.$$ 
Since $R$ decomposes into
the direct sum of $|W|$-copies of the regular representation of $W$, then Lemma \ref{aux.lem}  
shows that $\dim{(U\otimes R)^W}=|E_1||W|$. 
Consequently,
$H^*_{T\times T}(Y)^W$ as a free 
$(S\otimes S)^{W\times W}$-module of rank $|E_1||W|$. 
In summary, the surjectivity of $\iota^*$ implies that $|\mathcal{R}_1|=|E_1||W|$.
Now Lemma \ref{aux.idem.lem} finally yields $C_W(e)=\{1\}$, for all $e\in \Lambda_1$.

\medskip

For the converse, suppose that $C_W(e)=\{1\}$ for all $e\in \Lambda_1$. 
We need to show that $\iota^*$ is surjective. To achieve our goal, we modify
slightly an argument of \cite[Section 4.1]{lp:equiv} and \cite[Corollary 3.1.2]{bri:bru}.
Define the variety 
$\mathcal{N}=\bigcup_{w\in W} wY.$
We claim that this union is, in fact, a disjoint union.
Indeed, observe that $\mathcal{N}$ contains all the $T\times T$-fixed points of $X$.
That is, $\mathcal{N}$ has $|\mathcal{R}_1|$ fixed points.
On the other hand, each $wY$ has $|E_1|$ fixed points (for its corresponding $T$-action). 
Now, if it were the case that 
there is a pair of distinct subvarieties $wY$ and $w'Y$
with non-empty intersection, then 
this intersection should also contain $T\times T$-fixed points.
But then a simple counting argument would yield $|\mathcal{R}_1|<|E_1||W|$. 
This is impossible, by our assumptions and Lemma \ref{aux.idem.lem}.  
Hence,  
$\mathcal{N}=\bigsqcup_{w\in W} wY.$

\smallskip

Clearly, $\mathcal{N}$ is rationally smooth and equivariantly formal 
(because each $wY$ is so, for $w\in W$). 
Since $\mathcal{N}$ contains all the $T\times T$-fixed points of $X$,
the restriction map 
$$H^*_{T\times T}(X)\to H^*_{T\times T}(\mathcal{N})$$
is injective. 
Moreover, by Theorem \ref{thecurves2.thm},  
$\mathcal{N}$ contains all $T\times T$-curves of $X$ 
which are not in a closed $G\times G$-orbit, i.e.  
those which contribute to relations (1) and (2) of Theorem \ref{main.thm}.  
As a consequence, 
restriction to $\mathcal{N}$ induces isomorphisms 
$$H^*_{T\times T}(X)^{W\times W}\simeq H^*_{T\times T}(\mathcal{N})^{W\times W}\simeq 
\left(\bigoplus_{w\in W} H^*_{T\times T}(Y)\right)^{W\times W}
\simeq H^*_{T\times T}(Y)^W.$$
The proof is now complete.  
\end{proof}


\begin{lem}[\cite{lp:equiv}]\label{aux.lem}
If $N$ is a finite group, and $U$ and $V$ are two finite dimensional representations
of $N$ such that $V$ is the sum of copies of the regular representation of $N$,then 
$$\dim{(V\otimes U)}^N=\frac{\dim{V}\cdot \dim{U}}{|N|}.$$ \hfill $\square$ 
\end{lem}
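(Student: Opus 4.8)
The plan is to reduce the statement to a one-line character computation, using the averaging (Reynolds) projector together with the fact that the character of the regular representation is supported at the identity. First I would recall that, over $\Q$ (or any field whose characteristic does not divide $|N|$), the operator $p=\frac{1}{|N|}\sum_{g\in N}g$ acts on any finite-dimensional $N$-representation $W$ as the projection onto the invariant subspace $W^N$. Since the trace of a projection equals the dimension of its image, this gives the identity
$$\dim W^N=\operatorname{tr}(p\mid W)=\frac{1}{|N|}\sum_{g\in N}\operatorname{tr}(g\mid W).$$
I would then apply this to $W=V\otimes U$.

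The next step is the character computation itself. As $N$ acts diagonally on the tensor product, $\operatorname{tr}(g\mid V\otimes U)=\operatorname{tr}(g\mid V)\,\operatorname{tr}(g\mid U)$ for every $g\in N$. By hypothesis $V$ is a sum of copies of the regular representation $\Q[N]$, say $V\cong \Q[N]^{\oplus k}$, so $\operatorname{tr}(g\mid V)=k\,\operatorname{tr}(g\mid \Q[N])$. The regular representation permutes the basis $\{e_h\}_{h\in N}$ by $g\cdot e_h=e_{gh}$, which has no fixed basis vectors unless $g$ is the identity; hence $\operatorname{tr}(g\mid \Q[N])=0$ for $g\neq e$, while $\operatorname{tr}(e\mid \Q[N])=|N|$. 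Therefore every term with $g\neq e$ in the sum $\frac{1}{|N|}\sum_{g}\operatorname{tr}(g\mid V)\operatorname{tr}(g\mid U)$ vanishes, and only the $g=e$ contribution survives.

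Finally, the surviving term yields
$$\dim(V\otimes U)^N=\frac{1}{|N|}\operatorname{tr}(e\mid V)\,\operatorname{tr}(e\mid U)=\frac{(\dim V)(\dim U)}{|N|},$$
which is exactly the claimed formula; as a sanity check, $\dim V=k|N|$ makes the right-hand side equal $k\dim U$, the number of copies of $\Q[N]$ in $V$ times $\dim U$. (Equivalently, one could argue structurally, noting that $\Q[N]\otimes U\cong \Q[N]^{\oplus \dim U}$ as $N$-modules and that the trivial summand occurs in $\Q[N]$ with multiplicity one, but the character argument is the shortest route.) I do not anticipate any serious obstacle here: the only point requiring attention is the legitimacy of the Reynolds-operator identity, and this is guaranteed because the coefficient field $\Q$ has characteristic zero, so that $|N|$ is invertible and $p$ is a genuine idempotent whose trace computes $\dim W^N$.
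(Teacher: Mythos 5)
Your proof is correct. Note that the paper itself offers no argument for this lemma: it is quoted from Littelmann--Procesi \cite{lp:equiv} and closed with a square, so there is no internal proof to compare against, and your write-up supplies a genuinely self-contained justification. The route you take---the averaging projector $p=\frac{1}{|N|}\sum_{g\in N}g$, the identity $\dim W^N=\mathrm{tr}(p\mid W)$, multiplicativity of traces on tensor products, and the vanishing of the regular character off the identity---is the shortest standard argument, and it uses exactly the hypothesis that matters, namely that $|N|$ is invertible in the coefficient field, which holds over $\Q$. The structural variant you mention parenthetically (that $\Q[N]\otimes U\cong \Q[N]^{\oplus \dim U}$ as $N$-modules, so each copy of the regular representation in $V\otimes U$ contributes exactly one trivial summand) is the form of the argument one typically finds in the cited source; the two are interchangeable here, and your sanity check $\dim V=k|N|$, giving $\dim(V\otimes U)^N=k\dim U$, confirms the bookkeeping. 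No gaps.
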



\begin{lem}\label{aux.idem.lem}
Let $\mathcal{R}_1$ be the set of rank one elements of the Renner monoid $\mathcal{R}$.
Then $|\mathcal{R}_1|=|E_1|\cdot|W|$ if and only if $C_W(e)=1$ for every $e\in \Lambda_1$.  
\end{lem}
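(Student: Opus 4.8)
The plan is to reduce the global equality to a comparison of cardinalities indexed over the cross section lattice $\Lambda_1$. Restricting the decomposition $\mathcal{R}=\bigsqcup_{e\in\Lambda}WeW$ to rank one gives $\mathcal{R}_1=\bigsqcup_{e\in\Lambda_1}WeW$, while $E_1(\overline{T})=\bigsqcup_{e\in\Lambda_1}\{wew^{-1}\mid w\in W\}$ because $\Lambda_1$ indexes the $W$-conjugacy classes in $E_1(\overline{T})$. First I would record the two counts
$$|\mathcal{R}_1|=\sum_{e\in\Lambda_1}|WeW|,\qquad |E_1|=\sum_{e\in\Lambda_1}\frac{|W|}{|C_W(e)|},$$
the latter being orbit--stabilizer for the conjugation action of $W$ on $E_1(\overline{T})$, whose stabilizer of $e$ is exactly $C_W(e)=\{w\mid wew^{-1}=e\}$.

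The crux is the identity $|WeW|=(|W|/|C_W(e)|)^2$ for $e\in\Lambda_1$, and I would derive it from the geometry already established. By Proposition \ref{orbits.thm} the closed orbit $G[e]G$ is isomorphic to $G/P_e\times G/P_e^-$, and by the proof of Proposition \ref{gkmclosedorbits.prop} its $T\times T$-fixed locus is exactly $WeW$. Since $P_e$ and $P_e^-$ share the Levi factor $C_G(e)$, whose Weyl group is $C_W(e)$, each factor $G/P_e$, $G/P_e^-$ contributes $|W|/|C_W(e)|$ fixed points, whence $|WeW|=(|W|/|C_W(e)|)^2$. Equivalently, one computes the stabilizer of $e$ under the two-sided action $(u,v)\cdot e=uev^{-1}$ directly: the rank-one hypothesis forces $H_e\simeq\C^*$ by Proposition \ref{orbits.thm}, so $we=e=ew$ in $\mathcal{R}$ for all $w\in C_W(e)$, and the stabilizer is then seen to be $C_W(e)\times C_W(e)$. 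I expect this step to be the main obstacle: the delicate point is showing that $uev^{-1}=e$ forces $u,v\in C_W(e)$, which rests on the rank-one structure and on the identification $W\cap P_e=C_W(e)$.

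With these formulas in place the conclusion is a termwise comparison. Setting $c_e:=|C_W(e)|\geq 1$, I would write $|\mathcal{R}_1|=\sum_{e\in\Lambda_1}|W|^2/c_e^{\,2}$ and $|E_1|\cdot|W|=\sum_{e\in\Lambda_1}|W|^2/c_e$. Because $c_e\geq 1$ yields $c_e^{-2}\leq c_e^{-1}$, with equality if and only if $c_e=1$, each summand of $|\mathcal{R}_1|$ is bounded above by the corresponding summand of $|E_1|\cdot|W|$. Hence $|\mathcal{R}_1|\leq |E_1|\cdot|W|$ in general, and equality holds precisely when $c_e=1$ for every $e\in\Lambda_1$, that is, when $C_W(e)=\{1\}$ for all $e\in\Lambda_1$. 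This is the desired equivalence.
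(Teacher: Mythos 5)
Your proof is correct and follows essentially the same route as the paper: both decompose $\mathcal{R}_1$ and $E_1$ over $\Lambda_1$, reduce everything to the identity $|WeW|=\left(|W|/|C_W(e)|\right)^2$ for $e\in\Lambda_1$ together with $|We|=|W|/|C_W(e)|$, and conclude by a termwise comparison of nonnegative summands. The only difference is that the paper treats that identity as immediate for rank-one elements, whereas you justify it by counting $T\times T$-fixed points on the closed orbit $G[e]G\simeq G/P_e\times G/P_e^-$ (whose Levi $C_G(e)$ has Weyl group $C_W(e)$), which is a legitimate and slightly more self-contained way to fill in the same step.
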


\begin{proof}
Recall that 
$\Lambda_1$ can be identified with a set of representatives of the $W\times W$-orbits
in $\mathcal{R}_1$. Likewise, $\Lambda_1$ also corresponds to a set of representatives of the $W$-orbits
in $E_1$. Let $k$ be the cardinality of $\Lambda_1$ and let $e_1,\ldots, e_k$ be a complete
list of the elements of $\Lambda_1$. Since we are dealing with elements of rank one, 
we have $We_iW\simeq (W/C_W(e_i))\times (W/C_W(e_i))$, for all $i=1,\ldots,k$.  
Indeed, consider the maps $\zeta_i:We_iW\to (W/C_W(e_i))\times (W/C_W(e_i))$ given by 
$\zeta_i(\sigma e_i\tau^{-1})=(\sigma e_i \sigma^{-1}, \tau e_i \tau^{-1})$. 
By the uniqueness of the decomposition in $\mathcal{R}$,  
the various $\zeta_i$ are well-defined and surjective. 
To check that they are also injective, 
note that $e_i\sigma=e_i\sigma=e_i$ for any $\sigma\in C_W(e_i)$, because $e_i$ is minimal. 
Thus
$$|\mathcal{R}_1|=\sum_i|We_iW|=\sum_i |W/C_W(e_i)|^2.$$
On the other hand, the orbit $We_i\subset E_1$ satisfies $We_i\simeq W/C_W(e_i)$.
This implies the following formula 
$$|E_1|=\sum_i|We_i|=\sum_i|W/C_W(e_i)|.$$
The result now follows. 
\end{proof}

\smallskip

Next we characterize those group embeddings for which the map $\iota^*$ of Theorem \ref{comparison.thm} is an isomorphism. 

\begin{prop}\label{quasireg.cor}
Let $X=\P_\epsilon(M)$ be a projective group embedding. 
Let $Y=\P_\epsilon(\overline{T})$ be the associated torus embedding and 
$\iota:Y\to X$ the canonical inclusion. 
Then following are equivalent: 
\begin{enumerate}[(a)]
 \item $C_W(e)=\{1\}$ for every $e\in E_1$.
 \item All closed $G\times G$-orbits in $X$ are isomorphic to $G/B\times G/{B^-}$.
\end{enumerate}

\noindent           If, moreover, $X$ is rationally smooth, then (a) and (b) are equivalent to 
\smallskip 

\noindent (c) The induced map $\iota^*:H^*_{G\times G}(X)\to H^*_{T\times T}(Y)^W\simeq (H^*_T(Y)\otimes S)^W$
is an isomorphism. 
\end{prop}

\begin{proof}
For the equivalence between (a) and (b) remember that every closed $G\times G$-orbit in $X$ is
of the form $G/P_e\times G/{P_e^-}$, for $e\in E_1$. Also, recall that $C_G(e)$, the common
Levi subgroup of $P_e$ and $P_e^-$, has Weyl group equal to $C_W(e)$. 
Then $C_W(e)=\{1\}$, for all $e\in E_1$,
if and only if $P_e$ and $P_e^-$ are two mutually opposite Borel 
subgroups containing $T$, for all $e\in E_1$.
If, in addition, $X$ is rationally smooth, then 
the equivalence between statements (c) and (a) follows at once from Theorem \ref{comparison.thm}, 
since $\Lambda_1$ is the set of representatives of the $W$-orbits in $E_1$.  
\end{proof}

Group embeddings satisfying the equivalent conditions (a) and (b) 
of Proposition \ref{quasireg.cor} are called {\em toroidal} group embeddings,  
see \cite[Section 29]{ti:sph}. 
Observe that smooth toroidal group embeddings are exactly the regular embeddings 
of reductive groups \cite[Theorem 29.2]{ti:sph}. 
Our Theorem \ref{comparison.thm} 
states that the results of 
\cite{lp:equiv}, \cite{uma:kth} and
\cite{bri:bru} can be extended to the class of {\em toroidal rationally smooth} projective group embeddings.
Furthermore, Theorem \ref{comparison.thm} 
gives a precise relation between our results 
and those on simplicial toric varieties. 
Indeed, if $X=\P_\epsilon(M)$ is a toroidal rationally smooth group embedding, then 
$H^*_{G\times G}(X)$ is isomorphic to the subring of $W$-invariants in 
$H^*_T(Y)\otimes S$, where $H^*_T(Y)$ is the ring of piecewise polynomial functions 
on the fan of $Y=\P_\epsilon(\overline{T})$, see \cite{bv:kth}. 

\smallskip

We conclude this section 
describing the non-equivariant cohomology ring of toroidal rationally smooth 
group embeddings. This result is known for regular embeddings, by 
work of  
De Concini-Procesi \cite{dp:reg} 
and Littelmann-Procesi \cite{lp:equiv}. 

\begin{thm}\label{dcptype.cor}
Let $M$ be a reductive monoid with zero and unit group $G$.  
Let $K$ be a maximal compact subgroup of $G$ such that $T_K=T\cap K$ is a
maximal compact torus. 
Suppose that the projective group embedding $X=\P_\epsilon(M)$ is toroidal and rationally smooth. 
Then 
$$
H^*(X)\simeq H^*((K\times K)\times_{(T_K\times T_K)}Y)^W,
$$ 
where $Y=\P_\epsilon(\overline{T})\subset X$ is the associated toric variety.
\end{thm}

\begin{proof}
As $G/K$ is contractible, the functors $H^*_{G\times G}(-)$ and $H^*_{K\times K}(-)$
agree on $G\times G$-spaces. Similar remarks apply to $H^*_{T\times T}(-)$ and $H^*_{T_K\times T_K}(-)$, for $T/T_K$
is also contractible.
Since $X$ is toroidal and rationally smooth, Theorem \ref{comparison.thm} yields 
$$H^*(X)\simeq H^*_{T_K\times T_K}(Y)^W/\mathcal{I}H^*_{T_K\times T_K}(Y)^W,$$
where $\mathcal{I}$ is the ideal of $(S\otimes S)^{W\times W}$ 
generated by the elements of strictly positive degree.
%
As pointed out in \cite[Remark 2.3]{lp:equiv}, 
the induction formula \cite[p. 552]{q:spec} implies that
$$H^*_{T_K\times T_K}(Y)=H^*_{K\times K}((K\times K)\times_{T_K\times T_K}Y).$$
and the latter is isomorphic to
$$(S\otimes S)^{W\times W}\otimes_\Q H^*((K\times K)\times_{T_K\times T_K}Y),$$
because $(K\times K)\times_{T_K\times T_K}Y$ has no cohomology in odd degrees. 
Hence, $$H^*_{T_K\times T_K}(Y)^W\simeq (S\otimes S)^{W\times W}\otimes_\Q H^*((K\times K)\times_{T_K\times T_K}Y)^W,$$ 
and thus  
$$H^*_{T_K\times T_K}(Y)^W/\mathcal{I}H^*_{T_K\times T_K}(Y)^W
\simeq
H^*((K\times K)\times_{T_K\times T_K}Y)^W.  
$$ 
We conclude that  
$H^*(X)\simeq H^*((K\times K)\times_{T_K\times T_K}Y)^W.$  
\end{proof}


\section{Simple group embeddings} 

Let $H$ be a connected reductive group. Recall that an embedding of $H$ is called
simple if it contains only one closed $H\times H$-orbit. 

\subsection{Rationally smooth simple group embeddings. Classification.}

A reductive monoid $M$ with $0$ is called {\em $\mathscr{J}$-irreducible}
if $M\backslash\{0\}$ has exactly one minimal $G\times G$-orbit. 
Equivalently, there is a unique minimal nonzero idempotent  
$e_1\in E(\overline{T})$ 
such that $\Lambda_1=\{e_1\}$. 
It follows that 
$E_1\simeq W/C_W(e_1)$, and 
$fe_1=e_1$ for all $f\in \Lambda\setminus \{0\}$ (Subsection 1.1).    
Clearly, simple projective group embeddings 
are exactly the projectivizations of $\mathscr{J}$-irreducible monoids (Subsection 1.2). 
See  \cite{pr:idem} 
or \cite[Section 7.3]{re:lam} for a systematic discussion of $\mathscr{J}$-irreducible monoids. 
Due to the following result, 
any $\mathscr{J}$-irreducible monoid is semisimple and 
can be obtained via the procedure of Example \ref{construction.ex}.

\begin{thm}[\protect{\cite{pr:idem}}] \label{jirred.thm}
A reductive monoid $M$, with zero, is 
	$\mathscr{J}$-irreducible 
if and only if there is an irreducible rational representation
	      $\rho : M\to End(V)$ which is finite as a morphism
	      of algebraic varieties. \qed
\end{thm}

Let $M$ be a $\mathscr{J}$-irreducible monoid with $\Lambda_1=\{e_1\}$, as above.
We say that
$M$ is {\em $\mathscr{J}$-irreducible of type $J$} if 
$
J=\{s\in \Sigma\;|\; se_1=e_1s\},
$ 
where $\Sigma$ is the set of simple reflections of $W$. 
Notice that $C_W(e_1)=W_J$, the subgroup of $W$ generated by $J$.  
The set $J$ can be determined in terms of any irreducible representation
satisfying 
Theorem \ref{jirred.thm}. 
It is worthwhile to pause and notice that $\Lambda$ is completely determined by $J$. 

\begin{thm}[\protect{\cite{pr:idem}}]\label{orbits.jirred.thm} 
If $M$ is a $\mathscr{J}$-irreducible monoid of type $J$, then  
$$\Lambda\setminus \{0\}\simeq \{I\subseteq \Sigma\;|\;\textrm{no connected component of}\;\, I\; \textrm{is contained entirely in}\; J\},$$ 
in such a way that $e$ corresponds to $I\subseteq \Sigma$ if $I=\{s\in \Sigma\,|\,se=es\neq e\}$. This bijection 
identifies $\Lambda_2$ with $\Sigma \setminus J$. \qed 
\end{thm}

\smallskip 

Let $X=\P(M)$ be a simple projective embedding, where 
$M$ is a $\mathscr{J}$-irreducible monoid, $\Lambda_1=\{e_1\}$, and $J=\{s\in \Sigma \,|\, se_1=e_1s\}$.   
Then 
$P_{e_1}=\bigsqcup_{w\in W_J} BwB=P_J,$   
where 
$P_J\subset G$ is the standard 
parabolic subgroup associated to $J$. 
Hence, by Theorem \ref{orbits.thm}, 
the unique closed orbit of $X$ is 
$G[e_1]G\simeq G/P_J\times G/P_J^-$. 
Note that $X$ is toroidal only when $J=\emptyset$ (Proposition \ref{quasireg.cor}). 
In general, the $G\times G$-orbit structure of $X=\P(M)$ is completely determined by $J$ (Theorem \ref{orbits.jirred.thm}). 

\begin{dfn}
Let $X$ be a simple projective group embedding. 
We say that $X$ is {\bf simple of type $J$}
if $X=\P(M)$, where $M$ is a $\mathscr{J}$-irreducible monoid
of type $J$. 
\end{dfn}

The type of a simple group embedding 
is independent of its 
presentation as a projectivization of a monoid \cite{re:hpolyirr}. 
Renner \cite{re:desc,re:hpolyirr} has classified all 
rationally smooth simple group embeddings according to their type. Below is the list. 

\begin{thm}[\protect{\cite[Proposition 2.8]{re:hpolyirr}}] \label{class.thm}
For each irreducible Dynkin diagram 
the following is a list of all types, $J\subset \Sigma$, 
of $\mathscr{J}$-irreducible monoids $M$ of type $J$ 
such that $\P(M)$ is rationally smooth. 
%
The numbering of the elements of $\Sigma$ is as follows. For types $A_n, B_n, C_n, F_4,$ and
$G_2$ it is the usual numbering. In these cases the end nodes are $s_1$ and $s_n$. For type
$E_6$ the end nodes are $s_1,s_5$ and $s_6$ with $s_3s_6\neq s_6s_3$. For type
$E_7$ the end nodes are $s_1,s_6$ and $s_7$ with $s_4s_7\neq s_7s_4$. For type
$E_8$ the end nodes are $s_1,s_7$ and $s_8$ with $s_5s_8\neq s_8s_5$.
In each case  of type $E_n$, the nodes corresponding to $s_1, s_2,...,s_{n-1}$
determine the unique subdiagram of type $A_{n-1}$. For type $D_n$ the end
nodes are $s_1,s_{n-1}$ and $s_n$. The two subdiagrams of $D_n$, of type $A_{n-1}$,
correspond to the subsets $\{s_1, s_2,...,s_{n-2},s_{n-1}\}$ and
$\{s_1, s_2,...,s_{n-2},s_n\}$ of $\Sigma$. For $s_i\in \Sigma$, the 
corresponding simple root is denoted 
$\alpha_i$. 
\begin{enumerate}
   \item     $A_n$, $n\geq 1$. Let $\Sigma=\{s_1,...,s_n\}$.
\begin{enumerate}[(a)]
        \item $J=\phi$.
	\item $J=\{s_1,...,s_i\}$, $1\leq i<n$.
	\item $J=\{s_j,...,s_n\}$, $1<j\leq n$.
	\item $J=\{s_1,...,s_i,s_j,...s_n\}$, $1\leq i$, $i\leq j-3$ and $j\leq n$.
\end{enumerate}
	 \item     $B_n$, $n\geq 2$. Let $\Sigma=\{s_1,...,s_n\}$, $\alpha_n$ short.
\begin{enumerate}[(a)]
  \item $J=\phi$.
	\item $J=\{s_1,...,s_i\}$, $1\leq i<n$.
	\item $J=\{s_n\}$.
	\item $J=\{s_1,...,s_i,s_n\}$, $1\leq i$ and $i\leq n-3$.
\end{enumerate}
   \item $C_n$, $n\geq 3$. Let $\Sigma=\{s_1,...,s_n\}$, $\alpha_n$ long.
\begin{enumerate}[(a)]
  \item $J=\phi$.
	\item $J=\{s_1,...,s_i\}$, $1\leq i<n$.
	\item $J=\{s_n\}$.
	\item $J=\{s_1,...,s_i,s_n\}$, $1\leq i$ and $i\leq n-3$.
\end{enumerate}
	\item $D_n$, $n\geq 4$. Let $\Sigma=\{s_1,...s_{n-2},s_{n-1},s_n\}$.
\begin{enumerate}[(a)]
	\item $J=\phi$.
	\item $J=\{s_1,...,s_i\}$, $1\leq i\leq n-3$.
	\item $J=\{s_{n-1}\}$.
	\item $J=\{s_n\}$.
	\item $J=\{s_1,...,s_i,s_{n-1}\}$, $1\leq i\leq n-4$.
	\item $J=\{s_1,...,s_i,s_n\}$, $1\leq i\leq n-4$.
\end{enumerate}
	\item $E_6$. Let $\Sigma=\{s_1,s_2,s_3,s_4,s_5,s_6\}$.
\begin{enumerate}[(a)]
	\item $J=\phi$.
	\item $J=\{s_1\}$ or $\{s_1,s_2\}$.
	\item $J=\{s_5\}$ or $\{s_4,s_5\}$.
	\item $J=\{s_6\}$.
  \item $J=\{s_1,s_5\},\{s_1,s_2,s_5\}$ or $\{s_1,s_4,s_5\}$.
  \item $J=\{s_1,s_6\}$.
  \item $J=\{s_5,s_6\}$
  \item $J=\{s_1,s_5,s_6\}$.
\end{enumerate}
	\item $E_7$. Let $\Sigma=\{s_1,s_2,s_3,s_4,s_5,s_6,s_7\}$.
\begin{enumerate}[(a)]
  \item $J=\phi$.
	\item $J=\{s_1\}, \{s_1,s_2\}$ or $\{s_1,s_2,s_3\}$.
	\item $J=\{s_6\}$ or $\{s_5,s_6\}$.
	\item $J=\{s_7\}$.
  \item $J=\{s_1,s_6\},\{s_1,s_2,s_6\},\{s_1,s_2,s_3,s_6\},\{s_1,s_5,s_6\},$
  or $\{s_1,s_2,s_5,s_6\}$.
  \item $J=\{s_6,s_7\}$.
  \item $J=\{s_1,s_7\}$ or $\{s_1,s_2,s_7\}$.
  \item $J=\{s_1,s_6,s_7\},\{s_1,s_2,s_6,s_7\}$.
\end{enumerate}
	\item $E_8$. Let $\Sigma=\{s_1,s_2,s_3,s_4,s_5,s_6,s_7,s_8\}$.
\begin{enumerate}[(a)]
  \item $J=\phi$.
	\item $J=\{s_1\}, \{s_1,s_2\}, \{s_1,s_2,s_3\}$ or $\{s_1,s_2,s_3,s_4\}$.
	\item $J=\{s_7\}$ or $\{s_6,s_7\}$.
	\item $J=\{s_8\}$.
  \item $J=\{s_1,s_7\},\{s_1,s_2,s_7\},\{s_1,s_2,s_3,s_7\}, \{s_1,s_2,s_3,s_4,s_7\}$,\\
  $\{s_1,s_6,s_7\},\{s_1,s_2,s_6,s_7\}, \{s_1,s_2,s_3,s_6,s_7\}$
  or $\{s_1,s_2,s_5,s_6\}$.
  \item $J=\{s_7,s_8\}$.
  \item $J=\{s_1,s_8\}, \{s_1,s_2,s_8\}$ or $\{s_1,s_2,s_3,s_8\}$.
  \item $J=\{s_1,s_7,s_8\},\{s_1,s_2,s_7,s_8\}$.
\end{enumerate}
	\item $F_4$. Let $\Sigma=\{s_1,s_2,s_3,s_4\}$.
\begin{enumerate}[(a)]
	\item $J=\phi$.
	\item $J=\{s_1\}$ or $\{s_1,s_2\}$.
	\item $J=\{s_4\}$ or $\{s_3,s_4\}$.
	\item $J=\{s_1,s_4\}$.
\end{enumerate}
	\item $G_2$. {\em Let} $\Sigma=\{s_1,s_2\}$.
\begin{enumerate}[(a)]
	\item $J=\phi$.
	\item $J=\{s_1\}$.
	\item $J=\{s_2\}$.
\end{enumerate} \qed
\end{enumerate} 
\end{thm}

\smallskip

According to this list, if $G$ is a semisimple group of adjoint type, then 
the choice $J=\emptyset$ yields the wonderful compactification of $G$. 
On the other hand, in type $A_n$, the choice $J=\{s_2,\ldots,s_n\}$ 
yields $\P^{(n+1)^2-1}$, a compactification of $PGL_{n+1}$.   

\begin{rem} 
Projective group embeddings are also described in terms of weight polytopes \cite{ti:emb}. 
%
It is possible to state Theorem \ref{class.thm} 
in that language as well. Indeed, 
%
let $M$ be a $\mathscr{J}$-irreducible monoid. Now let $\rho:M\to End(V)$ 
be an irreducible representation as in Theorem \ref{jirred.thm}.  
Let $G_0$ be the semisimple part of $G$, with maximal torus $T_0=G_0\cap T$, and let 
$\rho_\lambda=\rho|G_0$ with highest weight $\lambda \in \mathcal{C}$, the rational Weyl chamber of $G_0$. 
One checks that $J=\{s\in \Sigma\,|\, s(\lambda)=\lambda\}$ (see e.g. \cite{re:desc}). 
So we can identify $e_1$ with $\lambda$.  
Now the {\em weight polytope} $\mathcal{P}_\lambda$ 
is defined to be  
the convex hull of $W\cdot \lambda$ in $X(T_0)\otimes \Q$, where $X(T_0)$ 
is the character group of $T_0$. 
This $W$-invariant polytope is considered in \cite{ti:emb}. 
Observe that 
$\mathcal{P}_\lambda$ is the polytope associated to the 
toric variety $\P(\overline{T})$. In particular, $E_1\simeq W/C_W(e_1)$ corresponds 
to the set of vertices of $\mathcal{P}_\lambda$.  
But even more is true. By \cite[Corollary 2.3]{re:hpolyirr} the face lattice $\mathcal{F}_\lambda$  
of $\mathcal{P}_\lambda$ is completely described in terms of 
the Weyl group $(W,\Sigma)$. 
Indeed, the set of $W$-orbits of $\mathcal{F}_\lambda$ is in one-to-one 
correspondance with $\{I\subseteq \Sigma\,|\,\textrm{no connected component of}\; I\; \textrm{is contained entirely in}\; J\}$. 
The latter is $\Lambda \setminus\{0\}$, by Theorem \ref{orbits.jirred.thm}. 
The correspondance assigns to $I\subseteq \Sigma$ the unique face $F\in \mathcal{F}_\lambda$ with 
$I=\{s\in \Sigma\,|\,s(F)=F \,\textrm{and}\, s|F\neq id\}$ whose relative interior $F^0$ has 
nonempty intersection with $\mathcal{C}$. 
It follows from Theorem \ref{ratsm.thm} that 
$\P(M)$ is rationally smooth  
if and only if  $\mathcal{P}_\lambda$ is a simple polytope, 
i.e. there are exactly $|\Sigma|$ edges of $\mathcal{P}_\lambda$ meeting at $\lambda$. 
Observe that 
$\Sigma \setminus J$ 
corresponds to 
the set of fundamental dominant
weights involved in the description of $\rho_\lambda$; 
that is, $\lambda$ is a dominant weight of the form  
$\lambda=\sum m_i\lambda_i$, where   
$\lambda_i$ runs over  
the fundamental weights attached to $s_i\in \Sigma\setminus J$, 
and $m_i$ are positive (nonzero) numbers. 
For group embeddings with more than one closed orbit there is certainly 
a dictionary between Timashev's description and 
Renner's. 
The interested reader should consult 
\cite[Section 7.2]{re:lam}, \cite{re:hpolyirr} and \cite{re:desc}. 
We shall not pursue this here. 
 \end{rem}

\subsection{Equivariant cohomology of simple group embeddings}  

Let $X=\P(M)$ be a simple embedding of type $J$.
Given that $X$ has only one closed orbit, we can associate to 
any $f\in E_2$ a unique reflection $s_{\alpha_f}$ such that 
$s_{\alpha_f}f=fs_{\alpha_f}\neq f$ (Lemma \ref{hclass.lem} and \ref{thetaforex3.lem}). 
Let $\Lambda_1=\{e_1\}$ and put $$L^J = \{f\in E_2\,|\, fe_1=e_1\}.$$ 

\begin{thm}\label{mainj.thm}
Let $X=\mathbb{P}(M)$ be a simple embedding of type $J$.
If $X$ is rationally smooth, then the natural morphism $$H^*_{T\times T}(X)\to H^*_{T\times T}(G/P_J\times G/P_J^-)$$ is injective, 
and its image consists of all maps $\varphi\in H^*_{T\times T}(G/P_J\times G/P_J^-),$ 
subject to the condition: 
for every 
$f\in L^J$, $u\in W$, and $v\in W$,  
the following holds 
$$\varphi(u\,e_1\,v)\equiv \varphi(u\,s_{\alpha_f}\,e_1\, s_{\alpha_f}\, v) \;
{\rm mod} \;(\alpha_f\circ int(u^{-1}), -\alpha_f \circ int(v)),$$
where $\alpha_f$ is the root associated 
to the reflection $s_{\alpha_f}$. 
\end{thm}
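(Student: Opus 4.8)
The plan is to derive this statement as a direct specialization of Theorem \ref{main.thm} to the $\mathscr{J}$-irreducible setting, where $\Lambda_1=\{e_1\}$ is a singleton. Since the unique closed orbit is $G[e_1]G\simeq G/P_J\times G/P_J^-$ (Proposition \ref{orbits.thm}), the sum $\bigoplus_{e\in\Lambda_1}H^*_{T\times T}(G[e]G)$ appearing in Theorem \ref{main.thm} collapses to the single factor $H^*_{T\times T}(G/P_J\times G/P_J^-)$, and the injectivity of the natural map is immediate. Everything then reduces to rewriting the congruences (a) and (b) of Theorem \ref{main.thm} in the form asserted here, for the single function $\varphi:=\varphi_{e_1}$.

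First I would show that alternative (b) of Theorem \ref{main.thm} never occurs. Let $f\in E_2(\overline{T})$ and let $f_1,f_2$ be the two rank-one idempotents below it. Both lie in $E_1(\overline{T})$, and since $M$ is $\mathscr{J}$-irreducible, $W$ acts transitively on $E_1(\overline{T})$; hence $f_1$ and $f_2$ are $W$-conjugate. By Lemma \ref{thetaforex3.lem} this forces $H_f$ to contain two elements, so we are always in case (a), and condition (b)—which requires $f_1,f_2$ to sit in distinct closed orbits—is vacuous. Thus every constraint on $\varphi$ comes from case (a) alone.

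The core of the proof is then a reparametrization. Using transitivity of $W$ on $E_1(\overline{T})$ once more, each $f\in E_2(\overline{T})$ can be written $f=ugu^{-1}$ with $g\in L^J$ and $u\in W$: choose $u$ so that $u^{-1}f_1u=e_1$ and put $g=u^{-1}fu$, whence $ge_1=e_1$, i.e. $g\in L^J$. Under this conjugation the idempotents below $f$ become $f_1=ue_1u^{-1}$ and $f_2=us_{\alpha_g}e_1s_{\alpha_g}u^{-1}$ (the idempotents below $g$ being $e_1$ and $s_{\alpha_g}e_1s_{\alpha_g}$, by Lemma \ref{hclass.lem}), and the associated reflection is $s_{\alpha_f}=us_{\alpha_g}u^{-1}$, so that $\alpha_f=\alpha_g\circ{\rm int}(u^{-1})$. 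Substituting these data into the congruence (a) of Theorem \ref{main.thm}, namely $\varphi(f_1u')\equiv\varphi(f_2u')\;{\rm mod}\;(\alpha_f,\alpha_f\circ{\rm int}(u'))$ for all $u'\in W$, and renaming $u'=v$, produces exactly the stated relation (with $\alpha_g$ in the argument $u\,\alpha_g\,e_1\,\alpha_g\,u^{-1}v$ understood as the reflection $s_{\alpha_g}$). Conversely, as $g$ ranges over $L^J$ and $u,v$ over $W$, every rank-two idempotent and every Weyl element $u'$ is recovered, so the two families of conditions coincide and the image is as claimed.

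The step requiring the most care is the transformation of the GKM-character under conjugation: verifying that $\alpha_f=\alpha_g\circ{\rm int}(u^{-1})$ is precisely the character appearing in the first slot of the modulus, and that the second slot $\alpha_f\circ{\rm int}(v)=\alpha_g\circ{\rm int}(u^{-1})\circ{\rm int}(v)$ matches the stated expression. This follows from Lemmas \ref{hclass.lem} and \ref{thetaforex4.lem}, but keeping track of the left/right conventions and of the $\mathrm{int}$-cocycle is the only genuinely delicate bookkeeping in the argument; the rest is the bgranslation dictated by the collapse $\Lambda_1=\{e_1\}$ and the vanishing of case (b).
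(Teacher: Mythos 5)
Your proposal is correct and follows essentially the same route as the paper's proof: specialize Theorem \ref{main.thm} to the case $\Lambda_1=\{e_1\}$, rule out case (b) because the single closed orbit forces every rank-two $H$-class to have two elements, and conjugate each $f\in E_2(\overline{T})$ into an element $g\in L^J$ to rewrite condition (a). Your explicit bookkeeping of the character transformation $\alpha_f=\alpha_g\circ{\rm int}(u^{-1})$ merely fills in a step the paper leaves implicit.
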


\begin{proof}
The first assertion is a direct consequence of Theorem \ref{main.thm}. 
Besides, $X$ contains a unique closed $G\times G$-orbit,  
so to describe the image  
we just need to focus 
on translating condition (1) of Theorem \ref{main.thm} into our situation. 
Let $f\in E_2$. 
Then there are exactly two rank-one idempotents $f_1,f_2$, such that
$f_1f=f_1$, $f_2f=f_2$ and $f_2=s_\alpha f_1 s_\alpha$, where
$s_\alpha f=s_\alpha f\neq f$.
On the other hand, because $\Lambda_1=\{e_1\}$, 
then $f_1=ue_1u^{-1}$, for some $u\in W$.
The latter implies that $f'=u^{-1}fu$ is an idempotent of $\overline{T}$ such that 
$f'e_1=e_1$, 
that is, $f'\in L^J$.
In short, any $f \in E_2$ 
is conjugate to an element of $L^J$. 
This observation and Theorem \ref{main.thm} (1) yield the result.   
\end{proof}

\begin{cor}
Let $X=\mathbb{P}(M)$ be a rationally smooth simple embedding of type $J$.
Let $e_1$ be the unique rank-one idempotent for which $\Lambda_1=\{e_1\}$. 
Then the ring $H^*_{G\times G}(X)$ consists of all $\varphi \in (S\otimes S)^{W_J \times W_J}$
such that 
$$(s_{\alpha_f},s_{\alpha_f})\,\varphi \equiv \varphi \; {\rm mod} \;(\alpha_f, -\alpha_f ),$$
for every $f\in \Lambda_2\subset L^J$.
 \end{cor}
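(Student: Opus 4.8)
The plan is to obtain this statement as the $G\times G$-equivariant shadow of Theorem \ref{mainj.thm}, in exact parallel to the way Corollary \ref{gequiv.thm} was deduced from Theorem \ref{main.thm}; equivalently, one may simply specialize Corollary \ref{gequiv.thm} to a simple embedding, and this is the route I would take. First I would record the specialization. Since $M$ is $\mathscr{J}$-irreducible we have $\Lambda_1=\{e_1\}$ and $C_W(e_1)=W_J$, so the tuple $(\Psi_e)_{e\in\Lambda_1}$ of Corollary \ref{gequiv.thm} collapses to a single map $\varphi=\Psi_{e_1}\colon We_1W\to (H^*_T\otimes H^*_T)^{W_J\times W_J}=(H^*_{T\times T})^{W_J\times W_J}$, which is precisely the domain and codomain asserted in the statement.

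Next I would argue that condition (b) of Corollary \ref{gequiv.thm} cannot occur here, so that only condition (a) survives. The key observation is that $\mathscr{J}$-irreducibility forces $W$ to act transitively on $E_1(\overline{T})$ (the discussion preceding Theorem \ref{jirred.thm}); hence for every $f\in E_2(\overline{T})$ the two rank-one idempotents $f_1,f_2\le f$ are $W$-conjugate, so they lie in a single $W\times W$-orbit. By Lemma \ref{thetaforex3.lem} this is equivalent to $H_f$ containing two elements, i.e. $H_f=\{f,s_{\alpha_f}f\}$. Thus case (b), which requires $H_f=\{f\}$ and $f_1,f_2$ non-conjugate (so their closed orbits are distinct), is vacuous; this is exactly the phenomenon already noted in the proof of Theorem \ref{mainj.thm}, where no curve of the third type joins fixed points lying in distinct closed orbits.

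Finally I would reduce condition (a), a priori indexed by all of $E_2(\overline{T})$, to the sub-collection $L^J=\{g\in E_2(\overline{T})\mid ge_1=e_1\}$. Given $f\in E_2(\overline{T})$ with rank-one idempotents $f_1,f_2\le f$, transitivity lets me write $f_1=ue_1u^{-1}$ for some $u\in W$; then $g:=u^{-1}fu\in E_2(\overline{T})$ has $e_1$ and $s_{\alpha_g}e_1s_{\alpha_g}$ as its two rank-one idempotents (Lemma \ref{hclass.lem}), whence $ge_1=e_1$ and $g\in L^J$. Because $\varphi$ is $W\times W$-equivariant in the sense of Corollary \ref{gequiv.thm}, the congruence imposed by $f$ is the $(u,u)$-translate of the one imposed by $g$, and is therefore equivalent to it. Collecting these, the defining relations reduce to $\varphi(e_1)\equiv\varphi(\alpha_g\,e_1\,\alpha_g)\ {\rm mod}\ (\alpha_g,\alpha_g)$ for every $g\in L^J$, as claimed. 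I expect the only genuine bookkeeping to lie in this last reduction: one must verify that conjugating each $f$ into $L^J$ and invoking equivariance neither discards a relation nor imposes a redundant one. This, however, is just the $G\times G$-analogue of the reindexing already performed in the proof of Theorem \ref{mainj.thm}, so no new difficulty arises.
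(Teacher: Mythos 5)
Your proposal is correct and follows essentially the same route as the paper, whose proof is simply ``translate Corollary \ref{gequiv.thm} into this situation, making use of Theorem \ref{mainj.thm}'': you specialize Corollary \ref{gequiv.thm} using $\Lambda_1=\{e_1\}$, $C_W(e_1)=W_J$, observe that case (b) is vacuous (the paper notes this in the proof of Theorem \ref{mainj.thm} via the single closed orbit; your transitivity argument through Lemma \ref{thetaforex3.lem} is the same fact), and reduce case (a) to $L^J$ by the very conjugation $f\mapsto u^{-1}fu$ already carried out in the proof of Theorem \ref{mainj.thm}, combined with the $W\times W$-equivariance used in the proof of Corollary \ref{gequiv.thm}. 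Your write-up just makes explicit the details the paper leaves implicit.
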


\begin{proof}
Simply translate Corollary \ref{gequiv.thm} 
into this situation, 
using Theorem \ref{mainj.thm}. 
\end{proof}


\end{document}